 \theoremstyle{plain}
 \newtheorem{thm}{Theorem}[section]
    \newtheorem{claim}[thm]{Claim}
 \newtheorem{cor}[thm]{Corollary}
 \newtheorem{lem}[thm]{Lemma}
 \newtheorem{prop}[thm]{Proposition}
 \theoremstyle{definition}
 \newtheorem{defn}[thm]{Definition}
 \newtheorem{notation}[thm]{Notation}
 \newtheorem*{thm*}{Theorem}
 \newtheorem*{notation*}{Notation}
 \theoremstyle{remark}
 \newtheorem{rmk}[thm]{Remark}
 \newtheorem{example}[thm]{Example}
 \newtheorem{question}[thm]{Question}
  \newtheorem{hypothesis}[thm]{Hypothesis}
\numberwithin{thm}{subsection} 
\def\beq{\begin{eqnarray}}
\def\eeq{\end{eqnarray}}
 \newcommand{\bp}{\begin{proof}[Proof]}
 \newcommand{\ep}{\end{proof}}
\numberwithin{equation}{subsection}
\newtheorem{thm}[subsection]{Theorem}
\newtheorem{lem}[subsection]{Lemma}
\newtheorem{prop}[subsection]{Proposition}
\newtheorem{cor}[subsection]{Corollary}
\newtheorem{notation}[subsection]{Notation}
\newtheorem{setup}[subsection]{Set-up}
\theoremstyle{definition}
\newtheorem{defn}[subsection]{Definition}
\newtheorem{defnprop}[subsection]{Definition/Proposition}
\newtheorem{rmk}[subsection]{Remark}
\newtheorem{claim}[subsection]{Claim}
\newtheorem*{conjecture*}{Conjecture}
\newtheorem*{theorem*}{Theorem}
\newtheorem*{claim*}{Claim}
\newtheorem*{corollary*}{Corollary}
\newtheorem*{notation*}{Notation}
\DeclareSymbolFont{bbold}{U}{bbold}{m}{n}
\DeclareSymbolFontAlphabet{\mathbbold}{bbold}
\def\Hom{{\sf Hom}}
\def\Spec{{\rm Spec}}
\def\Lie{{\rm Lie}}
\def\Gal{{\rm Gal}}
\def\End{{\rm End}}
\def\qbar{{\overline{\mathbb{Q}}}}
\def\K3{{\rm K3}}
\def\Fil{{\rm Fil}}
\def\Imag{{\rm Im}}
\def\GL{{\rm GL}}
\def\Res{{\rm Res}}
\def\Homo{{\rm Hom}}
\def\Qlbar{{\overline{\mb{Q}}_{\ell}}}
\def\Qpbar{{\overline{\mb{Q}}_{p}}}
\def\Frob{{\rm Frob}}
\def\tr{{\rm tr}}
\def\Fpbar{{\overline{\mb{F}}_p}}
\def\pdiv{{\mscr{G}}}
\def\sp{{\rm sp}}
\newcommand{\fisoc}[1]{\textbf{F-Isoc}(#1)}
\newcommand{\defeq}{\vcentcolon=}
\newcommand{\Cbar}{\overline{C}}
\newcommand{\Cprimebar}{\overline{C'}}
\newcommand{\Qlpbar}{\overline{\mathbb{Q}}_{\ell'}}
\newcommand{\pioneprime}{\pi_1^{(p')}}
\newcommand{\colim@}[2]{%
  \vtop{\m@th\ialign{##\cr
    \hfil$#1\operator@font colim$\hfil\cr
    \noalign{\nointerlineskip\kern1.5\ex@}#2\cr
    \noalign{\nointerlineskip\kern-\ex@}\cr}}%
}
\newcommand{\colim}{%
  \mathop{\mathpalette\colim@{}}\nmlimits@
}
\newcommand\nc{\newcommand}
\begin{document}

\title{Boundedness of trace fields of rank two local systems}

\author{Yeuk Hay Joshua Lam}
\email{joshua.lam@hu-berlin.de}
\address{Humboldt Universität Berlin,
       Institut für Mathematik- Alg.Geo.,
        Rudower Chaussee 25
        Berlin, Germany}

\date{\today}

\begin{abstract}  
Let $p$ be a fixed prime number, and $q$ a power of $p$. For any curve over $\mb{F}_q$ and any  local system on it, we have a number field generated by the traces of Frobenii at closed points, known as the trace field. We show that as we range over all pointed curves of type $(g,n)$ in characteristic $p$ and rank two local systems with infinite monodromy at infinity, the set of trace fields which are unramified at $p$ and of bounded degree is finite. This proves observations of Kontsevich obtained via  numerical computations, which are in turn closely related to the analogue of Maeda's conjecture over function fields. We also prove a similar finiteness result across all primes $p$. The key ingredients  of the proofs are Chin's theorem on independence of $\ell$ of monodromy groups, and  the boundedness of abelian schemes of $\mathrm{GL}_2$-type over curves in positive characteristics, obtained using partial Hasse invariants; the latter is an analogue of Faltings' Arakelov theorem for abelian varieties in our setting. 
\end{abstract}

\maketitle 
\setcounter{tocdepth}{1}
\tableofcontents

\section{Introduction}
\iffalse 

\begin{thm}
For any $N$ and $g$, there exists $C=C(N,g)$ such that the number of rank two contribution to motivic $\GL(N, \mb{Q})$-local systems on a genus $g$ curve is less than $C$. Moreover, there exists a number field such that all such motivic rank two local systems have image in $\GL(2, K)$.
\end{thm}
\fi 
Let $p$ be a prime number, and $q$ some power of $p$. Let $\bar{C}/\mb{F}_q$ be a smooth, projective curve of  genus $g$, and $Z$ a non-empty subset  of $n$ points of $\bar{C}$; we write $C\defeq \bar{C}-Z$ for the open curve. We will refer to $(\bar{C}, Z)$ as a pointed curve of type $(g,n)$, and to the points in $Z$  as the \emph{cusps} of $C$.

\begin{defn}
For $(\bar{C}, Z)/\mb{F}_q$ as above, let   $\mscr{L}(\bar{C}, Z)$  be  the set of isomorphism classes of  rank two local systems $\mb{L}$ on $C$ with infinite  monodromy around each point in $Z$, and such that $\det \mb{L}\cong \overline{\mb{Q}}_{\ell}(-1)$.
\end{defn}

For such a local system $\mb{L}$, the work of Drinfeld, which was later generalized to arbitrary rank by Lafforgue, implies that there is a unique number field $F$ and an embedding $\sigma: F\xhookrightarrow{} \overline{\mb{Q}}_{\ell}$ such that $\sigma(F)$ is the field generated by the traces of Frobenii at closed points of $C$. We refer to $F$ as the \emph{trace field} of $\mb{L}$, and denote by $\mf{F}(\mscr{L}(\bar{C},Z))$ the set of trace fields of local systems in $\mscr{L}(\bar{C}, Z)$. It is natural to wonder about the distribution of such trace fields; our first main result is the following boundedness statement. 

\begin{thm}\label{thm:main}
Fix a pair $(g,n)$.  Let $\mf{F}_{g,n}\defeq \bigcup_{\bar{C}, Z, q} \mf{F}(\mscr{L}(\bar{C}, Z))$ be the set of trace fields of   local systems in $\mscr{L}(\bar{C}, Z)$,  as $(\bar{C}, Z)$ and $q$ vary over all pointed curves of type $(g,n)$ and  powers of $p$, respectively. Then, for any integer $d$, there are only finitely many fields in $\mf{F}_{g,n}$ with degree $\leq d$ and which are  unramified  at $p$. 
\end{thm}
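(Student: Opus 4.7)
My strategy is to realize rank two local systems in $\mathscr{L}(\bar{C},Z)$ motivically, and then to apply the Faltings--Arakelov analogue for abelian schemes in characteristic $p$ which the abstract advertises as the key technical input. Concretely, for each $\mathbb{L}\in\mathscr{L}(\bar{C},Z)$ with trace field $F$, I would invoke Drinfeld's theorem together with its subsequent motivic refinements to produce an abelian scheme $\pi:A\to C$ of relative dimension $[F:\mathbb{Q}]$ carrying real multiplication by an order in $F$, such that $\mathbb{L}$ is realized as the $\sigma$-isotypic component of $R^1\pi_*\overline{\mathbb{Q}}_\ell$. The normalization $\det\mathbb{L}\cong\overline{\mathbb{Q}}_\ell(-1)$ supplies a polarization, and the infinite-monodromy condition at $Z$ forces semistable, purely toric degenerations there. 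Under this dictionary, the bound $[F:\mathbb{Q}]\leq d$ becomes a bound on the relative dimension of $A$, and unramifiedness of $F$ at $p$ constrains the shape of the associated $F$-crystal.

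The heart of the proof is then the following Arakelov-type statement: for fixed $(g,n)$ and $g'\leq d$, the set of triples $(\bar{C},Z,q)$ of type $(g,n)$ in characteristic $p$ equipped with a polarized $\GL_2$-type abelian scheme of relative dimension $g'$, semistable at $Z$, whose RM field is unramified at $p$, is finite up to isogeny. Since the $\ell$-adic local system determines $A$ only up to isogeny, isogeny-finiteness is exactly what we need; this is also what lets us sidestep the infinite orbits of Frobenius twists that otherwise plague the isomorphism classification in characteristic $p$. To prove it, I would package the $p$-adic data as an $F$-isocrystal with Hodge filtration, use the unramifiedness of $F$ at $p$ to bound the Newton and Hodge polygons purely in terms of $(g,n,d)$, and then invoke a Langton--Matsusaka type descent to pass from the isocrystal data back to abelian schemes.

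The principal obstacle, I expect, is the \emph{uniformity} of the bounds as the base $(\bar{C},Z,q)$ ranges over all pointed curves of type $(g,n)$ and all powers of $p$, rather than over a single fixed base. The theorem demands estimates that depend only on the combinatorial invariants $(g,n,d)$; I would try to achieve this by working over a suitable compactification of the moduli stack $\mathcal{M}_{g,n}$ in characteristic $p$ and controlling the Hodge bundle and associated crystalline invariants universally there. Once this uniform isogeny-finiteness of $\GL_2$-type abelian schemes is established, the theorem is immediate: each isogeny class carries a single RM field, and this RM field coincides with the trace field of the corresponding local system.
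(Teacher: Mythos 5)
Your first paragraph matches the paper's starting point (Drinfeld's theorem producing a $\GL_2(F)$-type abelian scheme with totally degenerate semistable reduction at the cusps), but the statement you place at the heart of the argument --- that the set of triples $(\bar{C},Z,q)$ of type $(g,n)$ carrying such an abelian scheme is \emph{finite} up to isogeny --- is far stronger than what is true or needed, and the paper deliberately does not prove it. The base curve varies over the infinitely many points of $\mathcal{M}_{g,n}(\overline{\mathbb{F}}_p)$, and the paper explicitly remarks that a positive-dimensional family of curves, all carrying rank two local systems with the same trace field, may well exist. What the paper actually establishes is weaker: after replacing $B$ by an isogenous abelian scheme (Frobenius untwisting until the Kodaira--Spencer map of each $\mathfrak{p}$-component is nonzero) one bounds the degree of the Hodge \emph{line bundles} $\omega_\tau$ on $\bar{C}$ purely in terms of $(g,n,p,d)$, using the partial Hasse invariants $h_\tau$ --- this is precisely where unramifiedness of $F$ at $p$ is used, via the eigendecomposition of the Hodge bundle under the $\mathcal{O}_F$-action. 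Your proposed substitute, bounding Newton and Hodge polygons, cannot do this job: those are discrete invariants taking only finitely many values once $d$ is fixed, and they say nothing about the degree of $\omega$ on the curve. The output of the degree bound is only that all the maps $\bar{C}\to\mathscr{A}_{8d}^*$ land in a single finite-type mapping stack $\mathcal{M}_{g,n}(\mathscr{A}_{8d}^*,h)$; finite type is not finite.

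The genuinely missing idea is therefore the second half of the argument: why infinitely many points of this finite-type stack cannot realize infinitely many distinct trace fields. The paper takes the Zariski closure of the points $s_i$ to obtain a family $\mathcal{A}\to\mathcal{C}\to S$ whose geometric monodromy representation is independent of $s_i$, and then applies Chin's $\ell$-independence theorem to produce a number field $E$, a group $G/E$ and a representation $\mathbb{U}$ such that every $F_i$ embeds into the fixed finite-dimensional $E$-algebra $Z(\mathrm{End}(\mathbb{U})^G)$; hence the $F_i$ have bounded degree and bounded ramification, so there are finitely many. This is the characteristic-$p$ replacement for the isomonodromy/Betti-$\mathbb{Q}$-structure argument that would finish the proof in characteristic zero, and nothing in your proposal plays this role. (You also omit the reduction from general infinite local monodromy to the unipotent case, via the \'etale cover trivializing the $\ell$-torsion and a Weil restriction, but that is a secondary gap compared with the two above.)
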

\begin{rmk}

\iffalse We expect the result to hold even if we  relax the condition that $F$ is split to $F$ simply being unramified at $p$. The splitness condition is used to bound the degrees of Hodge bundles of certain families of abelian varieties; removing it would  involve generalizing the Frobenius untwisting results of Xia's to $p$-divisible groups with actions. This would also allow one  to obtain similar  results in the \emph{horizontal} direction, namely finiteness across all primes.\fi 
 It is also straightforward to see that, if we fix the trace field $F$ and the pointed curve $(\bar{C}, Z)/\mb{F}_q$ as in Theorem~\ref{thm:main}, then only finitely many elements of $\bigcup_{q'} \mscr{L}(\bar{C}_{\mb{F}_{q'}}, Z_{\mb{F}_{q'}})$ have trace field $F$, where $(\bar{C}_{\mb{F}_{q'}}, Z_{\mb{F}_{q'}})$ denotes the basechange  to $\Spec(\mb{F}_{q'})$. On the other hand, it is possible to have a positive dimensional family of curves, all of which admit rank two local systems with the same trace field; this can be ruled  out in certain special cases, as is done in \cite{memaeda}. 

\end{rmk}

%The restriction to unipotent monodromy in Theorem~\ref{thm:main} is not a crucial one, in the sense that we may also allow bounded wild  ramification. We now state such a result. Let $P\subseteq \Gal_{\mb{F}_q((t))}$ denote the wild inertia subgroup.
\iffalse 
\begin{cor}\label{cor:wild}
Let $P \twoheadrightarrow H$ be a finite quotient of wild inertia. Let $\mscr{L}_H(\bar{C}, Z)$ denote the set of rank two local systems on $C$ such that for each $z\in Z$,
\begin{itemize} 
\item the local Galois representation $\rho_z: \Gal_{\mb{F}_q((t))}\rightarrow \GL_2(\overline{\mb{Q}}_{\ell})$, when restricted to $P$, factors through $H$, 
\item and moreover $\rho_z$ does not have potential good reduction. 
\end{itemize} 
Let $\mf{F}_H$ be the set of trace fields in $\mscr{L}_H(\bar{C},Z)$ as $(\bar{C}, Z)$ and $q$ vary over all pointed curves of type $(g,n)$ and  powers of $p$, respectively. Then there are only finitely many fields in $\mf{F}_H$ of degree $\leq d$ and unramified at $p$.
\end{cor}
\begin{rmk}
It would be interesting to know whether one can remove the bounded ramification condition altogether, i.e. whether there exist local systems with increasingly wild ramification and the same trace fields.
\end{rmk}
\fi 
We now state a result where the characteristic $p$ is allowed to vary. For this purpose, for a pointed curve $(\bar{C}, Z)/\mb{F}_q$ in characteristic $p$, we write $\mscr{L}_p(\bar{C}, Z)$ for $\mscr{L}(\bar{C}, Z)$ to emphasize the characteristic of the base field.
\begin{thm}\label{thm:diffp}
There are only finitely many fields in $\bigcup_{p, \bar{C}, Z}\mf{F}(\mscr{L}_p(\bar{C}, Z))$ of degree $d$ and completely split at $p$.
\end{thm}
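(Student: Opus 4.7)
The plan is to build on the proof of Theorem~\ref{thm:main}. To each local system $\mb{L} \in \mscr{L}_p(\bar{C}, Z)$ with trace field $F$ of degree $d$ and $p$ completely split in $F$, I attach an abelian scheme $f : A \to C$ of $\mathrm{GL}_2$-type and dimension $d$ equipped with an $O_F$-action, via Drinfeld's companions theorem and its motivic refinements. Complete splitness makes $O_F \otimes \mb{Z}_p \cong \mb{Z}_p^d$, so the $p$-divisible group $A[p^\infty]$ decomposes as a direct sum of $d$ height-two summands $G_v$, and the Hodge bundle splits as $\omega_A = \bigoplus_{v \mid p} L_v$ into $d$ line bundles on $\bar{C}$ already defined over the base field.

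The analogue of Faltings' Arakelov inequality proved earlier in the paper, applied to each $L_v$, gives an upper bound $\deg L_v \leq C(g, n)$ that is uniform in $p$. Conversely, the infinite monodromy condition at $Z$, combined with Grothendieck's semistable reduction theorem applied to each $G_v$, produces strictly positive lower bounds for $\deg L_v$ growing with $n$. Balancing the two estimates restricts $(g, n)$ to a finite set depending only on $d$, and for each such $(g, n)$ Theorem~\ref{thm:main} yields finiteness of $F$ in each fixed characteristic.

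What remains is uniformity in $p$, and here complete splitness is again essential: via the Serre--Tate and Grothendieck--Messing framework it should allow one to lift the triple $(A, O_F, \textrm{polarization})$ to a formal family in characteristic zero, yielding a non-constant morphism $C \to \mc{H}_F$ into the Hilbert modular variety of $F$. A height estimate for this morphism, coming from the classical Arakelov inequality in characteristic zero, then bounds the discriminant of $F$, and Hermite's theorem concludes. The hardest step, which I expect to require the most care, is carrying out the lifting compatibly with the polarization and with the semistable degeneration at $Z$ in a way that remains valid when some of the factors $G_v$ are supersingular; without this compatibility, one would be forced into a case split between bounded and unbounded $p$, and the horizontal uniformity would not be accessible.
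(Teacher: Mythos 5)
Your first step is right and matches the paper: complete splitness of $p$ in $F$ is used precisely to make the Arakelov-type bound on $\deg \bar f^{*}\omega$ depend only on $(g,n,d)$ and not on $p$ (this is Remark~\ref{rmk:split}, via \cite[Lemma~2.7]{zuoconstruct}). But the remainder of your argument has a genuine gap, concentrated exactly at the step you flag as hardest. Lifting the whole triple $(A\to C,\mc{O}_F,\text{polarization})$ to characteristic zero is not something Serre--Tate or Grothendieck--Messing will do for you: Serre--Tate canonical lifts exist pointwise at ordinary fibers and do not glue to an algebraic lift of a non-isotrivial family over the curve $C$, while Grothendieck--Messing controls deformations of the $p$-divisible group over a nilpotent PD-thickening and says nothing about algebraizing $C$ together with the map $C\to\mscr{A}_N$; deformations of the pair (curve, morphism to $\mscr{A}_N$) are obstructed in general, and no such lift is constructed anywhere in the paper. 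Without it, your route to horizontal uniformity (map to a Hilbert modular variety in characteristic zero, classical Arakelov inequality, bound on the discriminant, Hermite) never gets started; and even granting the lift, the Arakelov inequality bounds the Hodge degree, not $\Disc(F)$, so the Hermite step would still require a separate argument --- the characteristic-zero finiteness one actually wants is Faltings' theorem for $\GL_2(F)$-type families, as discussed in the paper's introduction.

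The paper's actual mechanism avoids lifting any individual family. The $p$-independent degree bound places all the data as points $s_i$ of the single finite-type mapping stack $\mc{M}_{g,n}(\mscr{A}_N^{*},h)$, in which points of all characteristics coexist; infinitely many points force a positive-dimensional Zariski closure $S$ carrying a family $\mc{A}\to\mc{C}\to S$ specializing to the $(A_i\times A_i^{t})^{4}$. If $S$ lies in characteristic zero one concludes by isomonodromy; if it lies in a single characteristic $p$ one concludes by Lemma~\ref{lemma:center}, i.e.\ Chin's $\ell$-independence of monodromy groups, which substitutes for the $\mb{Q}$-Betti structure. Separately, your intermediate claim of ``strictly positive lower bounds for $\deg L_v$ growing with $n$'' and the resulting restriction of $(g,n)$ to a finite set is both unjustified (no such lower bound holds or is needed) and unnecessary: $(g,n)$ is fixed throughout the theorem, the union being taken only over $p$ and over pointed curves of that fixed type.
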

The heuristic in the next section leads us to expect that \Cref{thm:main} should hold without the unramified-at-$p$ assumption, and Theorem~\ref{thm:diffp} should hold without the splitness condition, though we are not able to prove these: indeed, our method to bound the degree of the Hodge bundle uses partial Hasse invariants, and the obtained bounds depend on the  prime $p$ if the field is not totally split at $p$. In another direction, one can try to remove the infinite monodromy condition in the definition of $\mscr{L}(\bar{C}, Z)$: the main obstruction  here is that such local systems are not known to come from abelian varieties, or some other family of varieties with a \enquote{uniform} description: instead, they are known to arise in the cohomology of moduli of shtukas, whose geometry depends heavily on $\bar{C}$ and $q$. 

\subsection{Context} Our work is motivated by Maeda's conjecture in the function field setting. For example, for each curve $(\bar{C}, Z)/\mb{F}_q$ of type  $(g,n)=(0,4)$, computations of Kontsevich \cite[\S~0.1]{maxim} shows that, in almost all cases,  there are four trace fields, each of degree roughly $(q+1)/4$\footnote{the splitting into four fields comes from the Atkin-Lehner operators}. This contributes to the Maeda philosophy that, generically, trace fields should be as large as possible.  We refer the reader to \cite{memaeda} for more on the analogue of Maeda's conjecture over function fields, as well as references for the number field case.

We were also heavily  influenced  by the results of Faltings \cite{faltings} on Arakelov's theorem for abelian varieties, as well as Deligne's finiteness theorem \cite{delignefinitude}. More precisely, we were motivated by the possibility of uniformity in Deligne's finiteness theorem: the latter says that on a fixed complex curve $C$, only finitely many rank $N$ $\mb{Q}$-local systems can come from algebraic geometry, and by uniformity we mean whether this finite number depends on the underlying curve in its moduli space. As a first step towards this question, for a fixed $N$, one may ask about the contribution to rank $N$ $\mb{Q}$-local systems coming from abelian varieties: in this case  uniformity is known and  is a corollary of  Faltings' theorem \cite[Theorem~1]{faltings}. Note that this uniformity from Faltings' proof seems, at least to the author, to be stronger than the subsequent re-proofs of the same result due to Deligne \cite{delignefinitude}, as well as Jost and Yau \cite{jostyau}, although these works are more general in that they  apply to local systems not coming from abelian schemes. 

In any case, for fixed $(g,n)$, one sees that there are only finitely many $F$'s of fixed degree such that a curve of type $(g,n)$ carries a non-trivial family of abelian varieties of $\GL_2(F)$-type. Theorem~\ref{thm:main} is the analogue of this in positive characteristic. It would be interesting to investigate the analogue of the full strength version (i.e. beyond abelian varieties of $\GL_2$-type) of Faltings' theorem in positive characteristic. For example, is it true that, in characteristic $p$,  any abelian scheme of dimension $D$ on a curve $(\bar{C}, Z)$ of type $g,n$ is isogenous to one whose Hodge bundle has degree bounded by $g,n,p,D$? We should also mention a related result of Litt \cite{litt} which is an analogue of Deligne's result in positive characteristic, but for $\ell$-adic coefficients.

\subsection{Sketch of proof of Theorem~\ref{thm:main}}
For this sketch, we will focus on the case of local systems with unipotent monodromy around each cusp, which is the key part. By work of Drinfeld, we know that the rank two local $\Qlbar$-local systems in question come from abelian schemes of $\GL_2(F)$-type, with $F$ being the trace field of Frobenii; suppose that there are infinitely many such $F$'s of degree $d$. Using Zarhin's trick, we obtain prinipally polarized abelian schemes of dimension $N\defeq 8d$ over $C$, and therefore infinitely many maps $C\rightarrow \mscr{A}_N$, which extend to maps $\bar{C}\rightarrow \mscr{A}_N^*$, where the latter denotes the minimal compactification. The first key idea  is that, using partial Hasse invariants and  a  Frobenius untwisting result,  we can pass to an isogenous abelian scheme and bound the degree (with respect to the Hodge bundle of $\mscr{A}_N^*$) of such maps in terms of just $(g,n, p,d)$.  This step may be seen as an enhancement of the recent beautiful work \cite{zuoconstruct} of Krishnamoorthy, Yang, and Zuo by the use of partial Hasse invariants; it is also where the unramified at $p$ condition is used.

Now consider the moduli space $\mc{M}$ of curves $C$ of type $(g,n)$, along with maps $\bar{C}\rightarrow \mscr{A}_N^*$ of some fixed  degree. Using the previous step, we have infinitely many points $s_i\in \mc{M}$, corresponding to  infinitely many fields $F_i$'s. Since $\mc{M}$ is of finite type, taking the Zariski closure of the $s_i$'s give a positive dimensional family of curves $\mc{C}\rightarrow S$, and an abelian scheme $\mc{A}\rightarrow \mc{C}$, such that the fiber of $\mc{A}$ at $s_i\in S$ is a (power of an) abelian scheme of $\GL_2(F_i)$-type. If we were in characteristic zero, this would already give a contradiction  since by isomonodromy all the monodromy reprsentations of $\pi_1(\mc{C}_{s_i})$ must be the same. In our situation, the $\mb{Q}$-structure on Betti cohomology is of course not available, and  we crucially make  use of   Chin's theorem on $\ell$-independence of monodromy groups and some tricks, such as the finiteness of number fields of fixed degree and bounded ramification,  to conclude.

\subsection*{Acknowledgements} I am grateful to Maxim Kontsevich for sharing his computations and insights with me, as well as  to Mark Kisin, Bruno Klingler and Sasha Petrov for several enlightening discussions and comments on a previous draft. As is hopefully clear throughout the text, we owe a great  intellectual debt to  the authors of \cite{zuoconstruct}, as well as  the previous work \cite{snowdentsimerman}; we thank both sets of authors for their beautiful work. 

\subsection{Notation} Throughout, $p>0$ will denote a prime, $q$ a power of $p$. For a scheme $Z$, $\pi_1(Z)$ will always denote the \'etale fundamental group. 

\section{Drinfeld's work on function field Langlands}
We first recall the results of Drinfeld from his work on function field Langlands for $\GL_2$. Throughout this section, we use notation as follows. As in the introduction, let $\bar{C}/\mb{F}_q$ be a smooth, projective curve of  genus $g$, and $Z$ an effective Cartier divisor on  $\bar{C}/\mb{F}_q$ of degree $n$; we write $C\defeq \bar{C}-Z$ for the open curve. We will refer to $(\bar{C}, Z)$ as a pointed curve of type $(g,n)$.

Let $\ell$ be a prime distinct from $p$, and $\mb{L}$ be a rank two $\overline{\mb{Q}}_{\ell}$-local system on $C/\mb{F}_q$,  such that $\det \mb{L}\cong \overline{\mb{Q}}_{\ell}(-1)$. Let $F\subset \Qlbar$ be the  field generated by Frobenius traces of  $\mb{L}$; we write  $[F:\mb{Q}]=d.$, and sometimes refer to $F$ simply as the Frobenius trace field, or simply trace field,  of $\mb{L}$.
\begin{thm}\label{thm:drinf}
Suppose $\mb{L}$ has infinite local monodromy around some $z\in Z$. Then there exists an abelian scheme $\pi_{Drinf}: B_{Drinf}\rightarrow C$ of relative dimension $d$, such that $\End_C(B_{Drinf})\otimes \mb{Q}=F$, and $\mb{L}$ appears as a direct summand of $R^1\pi_{Drinf, *}\overline{\mb{Q}}_{\ell}$. Moreover, if we write  $\mb{D}\defeq R^1\pi_{Drinf, *}\Qpbar\in \fisoc{C}_{\Qpbar}$, then we have a decomposition 
\[
\mb{D}=\bigoplus_{\tau} \mb{D}_{\tau},
\]
where the above sum is over embeddings $\tau: F\xhookrightarrow{} \Qpbar$,  each $\mb{D}_{\tau}$ is two dimensional, and the induced action of $F$ on $\mb{D}_{\tau}$ is through $\tau$.

\end{thm}
We refer the reader to \cite[Theorem~2.2]{zuoconstruct} as well as to the remark in loc. cit. that follows for how to deduce the above theorem from the works of Drinfeld. Our next goal is  to refine the abelian scheme $B_{Drinf}$ as follows, under further assumptions.

\begin{defn}\label{defn: av-gl2-type}
    For any scheme $Z/\mb{F}_q$ and number field $F$, we say that an abelian scheme $\pi: B\rightarrow Z$ is of $\GL_2(F)$-type if it satisfies the properties in \Cref{thm:drinf}: that is 
    \begin{itemize}
    \item $\End(B)\otimes \mb{Q}=F$, 
        \item for $\ell\neq p$,  there is a decomposition 
        \[
        R^1\pi_*\Qlbar=\bigoplus_{\tau} \mb{L}_{\tau},\]
        where each $\mb{L}_{\tau}$ has rank two, the induced action of $F$ on $\mb{L}_{\tau}$ is via the embedding $\tau$, and $\det \mb{L}_{\tau}\cong \Qlbar(-1)$. Note that the field of Frobenius traces of each $\mb{L}_{\tau}$ is $F$. 
    \end{itemize}
\end{defn}
We recall the definition of companions.
\begin{defn}
    Let $Y/\mb{F}_q$ be a smooth scheme, and $L$  an $\Qlbar$-local system on $Y$. Let $\ell'\neq p$ be a prime number (possibly equal to $\ell$). Let $\iota \colon \Qlbar\rightarrow \Qlpbar$ be a (possibly non-continuous) field isomorphism; abusing notation, let $\iota$ also denote the induced isomorphism $\Qlbar[t]\rightarrow \Qlpbar[t]$. A \emph{$\iota$-companion to $L$} is a lisse $\Qlpbar$-Weil sheaf $L'$ on $Y$, such that for all closed points $y$, we have:
    $$\iota(P_y(L,t))=P_{y}(L',t)\in \Qlpbar[t].$$
\end{defn}

\begin{prop}
    For an abelian scheme $\pi: B\rightarrow Z$ of $\GL_2(F)$-type, and let $\mb{L}_{\tau}$'s be as in \Cref{defn: av-gl2-type}. For any $\tau, \tau'$, the  local systems $\mb{L}_{\tau}, \mb{L}_{\tau'}$ are companions. Moreover, every $\Qlbar$-companion of $\mb{L}_{\tau}$ is isomorphic to $\mb{L}_{\tau'}$ for some $\tau'$.
\end{prop}
\begin{proof}
    This is recorded in \cite[Remark 2.8]{krishnapal}.
\end{proof}
\begin{comment}
\begin{proof}
    By definition (and Cebotarev), for a fixed $\ell\neq p$, the number of isomorphism classes of $\Qlbar$-companions of $\mb{L}_{\tau}$ is precisely $[F:\mb{Q}]$, where $F$ is the Frobenius trace field of $\mb{L}_{\tau}$. 

    By works of Lafforgue and Drinfeld (see \cite[Theorem 1.1]{drinfeld2010conjecture}), all $\iota$-companions exist for $\iota: \Qlbar\simeq \Qlbar$. By counting, it therefore suffices to show that every $\iota$-companion of $\mb{L}_{\tau}$ is a subobject of  $R^1\pi_*\Qlbar$. 
\end{proof}
\end{comment}

Given a local system $\mb{L}$ on $C$, and $z\in Z$, we can restrict it to the punctured neighborhood of $z\in \bar{C}$: upon picking a local coordinate $t$ at $z$, we obtain  a representation of $\Gal(\mb{F}_q((t)))$. We say that $\mb{L}$ has unipotent local monodromy at $z$ if the inertia subgroup  $I\subset \Gal(\mb{F}_q((t)))$ acts unipotently on this representation. We say that the monodromy is infinite at $z$ if the representation of $I$ does not have finite image.
\begin{prop}\label{prop:drinfeld}
Suppose that $\mb{L}$ has infinite unipotent monodromy around each $z\in Z$, and that its trace field $F$ is unramified at $p$.  Then there  exists an abelian scheme $\pi: B\rightarrow C$ of relative dimension $d$,  with N\'eron model $\bar{B}\rightarrow \bar{C} $ such that 
\begin{enumerate}
\item $B$ has semi-stable and totally degenerate reduction around each $z\in Z$,
\item $\End_C(B)\otimes \mb{Q}=F$, 
    \item $\mb{L}$ appears as a direct summand of $R^1\pi_*\overline{\mb{Q}}_{\ell}$,
    \item the abelian scheme $(B\times B^t)^4$ admits a principal polarization, inducing a  map $C\rightarrow \mscr{A}_{8h}$. The latter extends to a map
    \[
    \bar{f}: \bar{C}\rightarrow \mscr{A}_{8d}^*,
    \]
     such that the line bundle $\bar{f}^*\omega$ has  degree bounded above by a function of $g,n,d,$ and $p$ (crucially,  the dependence is on $p$ and not $q$). Here, $
     \mscr{A}_N^*$ denotes the minimal compactification of the moduli stack of principally polarized abelian varieties of dimension $N$, with  $\omega$  the  Hodge line bundle on it. 
\end{enumerate}
\end{prop}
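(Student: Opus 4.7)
The plan is to produce the desired $B$ from $B_{Drinf}$ in two stages: first I normalize $B_{Drinf}$ to have semi-stable, totally degenerate reduction and apply Zarhin's trick to secure a principal polarization; second I pass to an $\mathcal{O}_F$-linearly isogenous abelian scheme using iterated Frobenius untwisting, guided by partial Hasse invariants, to force the Hodge bundle degree into a range controlled only by $(g, n, p, d)$.

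Starting from $B_{Drinf}$ furnished by Theorem~\ref{thm:drinf}, conditions (2) and (3) already hold. For (1), the $F$-action decomposes the rational $\ell$-adic Tate module as $V_\ell B_{Drinf} = \bigoplus_{\sigma: F \hookrightarrow \overline{\mb{Q}}_\ell} \mb{L}^\sigma$, and each Galois conjugate $\mb{L}^\sigma$ inherits the infinite unipotent local monodromy of $\mb{L}$ at every $z \in Z$. Grothendieck's $\ell$-adic monodromy criterion then gives semi-stable reduction at each $z$, and since the unipotent rank equals $d = \dim B_{Drinf}$, the reduction is purely toric; passing to the identity-component N\'eron model yields a semi-abelian $\bar{B}_{Drinf} \to \bar{C}$ whose fibres over $Z$ are tori. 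Zarhin's trick applied to $(B_{Drinf} \times B_{Drinf}^t)^4$ produces a canonical principal polarization, hence a morphism $f: C \to \mscr{A}_{8d}$; the semi-stable totally toric reduction at the cusps then allows $f$ to extend to $\bar{f}: \bar{C} \to \mscr{A}_{8d}^*$ via the Faltings--Chai theory of toroidal and minimal compactifications.

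The heart of the argument is bounding $\deg \bar{f}^*\omega$, which up to a factor of $8$ equals $\deg \det \pi_{Drinf, *}\Omega^1_{\bar{B}_{Drinf}/\bar{C}}$. Since $F$ is unramified at $p$, I have $\mathcal{O}_F \otimes \mb{Z}_p = \prod_{v \mid p} \mathcal{O}_{F, v}$ acting on $B_{Drinf}[p^\infty]$; passing to a suitable finite cover $\bar{C}' \to \bar{C}$ on which all embeddings $\sigma: \mathcal{O}_{F, v} \hookrightarrow W(\overline{\mb{F}_p})$ are defined, the Hodge bundle decomposes as $\omega = \bigoplus_\sigma \omega_\sigma$ with each $\omega_\sigma$ a line bundle, and for each $\sigma$ one has a partial Hasse invariant $h_\sigma \in H^0(\bar{C}', \omega_\sigma^{\otimes p} \otimes \omega_{\mathrm{Frob}\circ\sigma}^{-1})$. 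If some $h_\sigma$ vanishes identically, then by the Frobenius untwisting theorem of Xia (as used and enhanced in \cite{zuoconstruct}) the relevant graded piece of the $p$-divisible group is a Frobenius pullback, and I replace $B_{Drinf}$ by an $\mathcal{O}_F$-linearly isogenous abelian scheme with strictly smaller $\deg \omega_\sigma$; this process terminates by positivity of Hodge degrees. For the resulting minimal $B$, every $h_\sigma$ is a nonzero section, so $p \deg \omega_\sigma - \deg \omega_{\mathrm{Frob}\circ\sigma}$ equals the length of the zero divisor of $h_\sigma$ on $\bar{C}'$, supported on the non-ordinary locus. A geometric analysis of this locus together with the known behaviour of Hasse invariants at the semi-stable cusps then yields the desired bound on each $\deg \omega_\sigma$, and hence on $\deg \bar{f}^*\omega$, purely in terms of $g, n, p, d$.

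I expect the main obstacle to be precisely this last step: making the iterative untwisting terminate in a uniform manner, and simultaneously controlling the vanishing divisors of all partial Hasse invariants through the cyclic permutation of indices induced by Frobenius. The unramifiedness of $F$ at $p$ is essential here, since otherwise the Hodge bundle does not split into line bundles indexed by embeddings and the partial-Hasse-invariant technology breaks down; this is also the source of the hypothesis's role in Theorem~\ref{thm:main}.
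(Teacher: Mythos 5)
Your reduction of parts (1)--(3) and the construction of $\bar f$ via Zarhin's trick and Faltings--Chai is in line with the paper. The genuine gap is in the degree bound, part (4). Your mechanism is: untwist until every partial Hasse invariant $h_\sigma$ is nonzero, then read off $\deg\omega_\sigma$ from the zero divisors of the $h_\sigma$. This cannot work for two reasons. First, the non-vanishing of all $h_\sigma$ is \emph{automatic} here and costs no untwisting: the totally degenerate reduction at the cusps forces a multiplicative sub-$p$-divisible group near each $z\in Z$, on whose Dieudonn\'e module Frobenius is bijective, so $h_\sigma\neq 0$ from the start (this is Proposition~\ref{prop:hassenonzero} in the paper). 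Second, and more seriously, non-vanishing of the $h_\sigma$ only yields the cyclic chain of inequalities $\deg\omega_{\sigma\tau}\le p\,\deg\omega_{\tau}$; composing around the Frobenius orbit gives $\deg\omega_\tau\le p^f\deg\omega_\tau$, which is vacuous. Your proposed anchor --- bounding the length of the zero divisor of $h_\sigma$ by a ``geometric analysis of the non-ordinary locus'' --- is circular, since the total length of those zero divisors is $(p-1)\sum_\sigma\deg\omega_\sigma$, i.e.\ exactly the quantity you are trying to bound; there is no a priori control on the non-ordinary locus independent of the Hodge degrees.

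The missing idea is that the Frobenius untwisting must be triggered by the vanishing of the \emph{Kodaira--Spencer map}, not of the Hasse invariants. The paper applies Lemma~\ref{lemma:untwist} to the factors $\pdiv_{\mf p}$ repeatedly (termination as in \cite[Proof of Proposition~2.4]{zuoconstruct}) until Kodaira--Spencer is non-vanishing for some embedding $\tau_0$ of each $\mf p$. A nonzero map of line bundles
\[
\Fil^1_{\tau_0}\longrightarrow \bigl(H^{dR}_{1,\tau_0}/\Fil^1_{\tau_0}\bigr)\otimes\Omega^1_{\bar C}(Z),
\]
combined with $\deg\bigl(H^{dR}_{1,\tau_0}/\Fil^1_{\tau_0}\bigr)=-\deg\Fil^1_{\tau_0}$, pins down the single anchor $\deg\Fil^1_{\tau_0}\le(2g-2+n)/2$. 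Only then do the (automatically nonzero) iterated Verschiebung maps $\omega_{\sigma^j\tau_0}\to\omega_{\tau_0}^{\otimes p^j}$ propagate this one bound to every other embedding in the orbit, giving $\deg\omega_{\sigma^j\tau_0}\le p^j\deg\omega_{\tau_0}$ and hence a bound depending on $g,n,p,d$. Without the Kodaira--Spencer anchor your argument has no starting point, so as written the proof of (4) does not go through.
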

The proof of this will appear at the end of \S~\ref{section:untwist}.
\begin{rmk}\label{rmk:split}
In the case where $F$ is completely split above $p$, the above  follows from \cite[Lemma~2.7]{zuoconstruct}; in fact, in this case the degree of $\bar{f}^*\omega$ is boundedly above by  a function of only $g,n,d$. 
\end{rmk}
\iffalse 
\begin{proof}
This is contained in \cite[Lemma 2.7]{zuoconstruct}.
\end{proof}
\fi

\section{Partial Hasse invariants}
For any  abelian scheme  $A$  over $C/\mb{F}_q$ with semi-stable reduction along $Z$, we let $\bar{A}\rightarrow \bar{C}$ denote its N\'eron model. Let $\omega_A$  denote its  Hodge vector bundle, which is a vector bundle of dimension $
\dim A$ on $C$ ; similarly we denote by $\omega_{\bar{A}}$ the  Hodge vector bundle of $\bar{A}$, which is a vector bundle over $\bar{C}$, whose restriction to $C$ is given by $\omega_A$.

\begin{setup}\label{setup:avB} For the rest of this section we suppose $B\rightarrow C$ is an abelian scheme with  semi-stable and totally degenerate reduction along $Z$, and moreover $\End(B)\otimes \mb{Q}=F$, which is unramified at $p$ and such that   $\dim B=[F:\mb{Q}]=d$. It is straightforward to check that, passing to an isogenous abelian scheme if necessary, we can (and do)  assume that the ring of integers $\mc{O}_F\subset F$ acts on $B$; we refer to such abelian schemes as $\GL_2(\mc{O}_F)$-type.
%By the Tate conjecture for endormophisms of abelian varieties over function fields, proven by Zarhin for $\ell\neq p$ and de Jong \cite{dejongihes} for $\ell=p$, by passing to an abelian variety isogenous to $B$ over $C$ if necessary, we may assume the ring of integers  $\mc{O}_F\subset F $ acts on $B$. 
\end{setup} 
%let $\bar{B}\rightarrow \bar{C}$ denote its N\'eron model, which we may view as a logarithmic abelian variety on $(\bar{C}, Z)$ equipped with its natual log structure. 

\begin{notation}   Suppose  $
\mb{F}_q$ contains  all the residue fields $k_{\mf{p}}$ for primes $\mf{p}$ of $\mc{O}_F$ lying above $p$; for each $\tau:k_{\mf{p}}\xhookrightarrow{} \mb{F}_q$,  let $\omega_{B, \tau}$ denote the summand of $\omega_B$ on which the $\mc{O}_F$-action   is through $\mc{O}_F\rightarrow k_{\mf{p}}\xhookrightarrow{\tau} \mb{F}_q$. We define $\omega_{\bar{B}, \tau}$ similarly. When the context is clear, we sometimes omit the subscripts $B, \bar{B}$ and simply denote the Hodge bundles by $\omega_{\tau}$. Similarly, whenever there is a module with an  $\mc{O}_F$-action, we use the subscript $\tau$ to denote the component where the action is through $\tau$. 
\end{notation}

\begin{defnprop}
\begin{enumerate} 
\item 
Let $(M,F,V)$ denote the (covariant) logarithmic Dieudonn\'e crystal on $(\bar{C}, Z)$ associated to $\bar{B}$, as constructed by Kato and Trihan \cite[\S~4]{katotrihan}\footnote{see also \cite[Appendix A]{krishnapal} for a very nice summary}. We  denote by $M_{(\bar{C}, Z)}$ the evaluation of the crystal on the trivial thickening $(\bar{C}, Z)$; this is a vector bundle on $\bar{C}$ with integrable connection  with  logarithmic poles along $Z$, and  is moreover equipped with Frobenius and Verschiebung maps $F$ and $V$. 
\item For each prime $\mf{p}$ of $F$ and each embedding $\tau: k_{\mf{p}}\xhookrightarrow{} \mb{F}_q$, let $H_{1, \tau}^{dR}\defeq M_{(\bar{C},Z), \tau}$ \footnote{the notation here is to reflect that, when restricted to $C$, the bundle is given by the $\tau$-component of the relative de Rham \emph{homology} of $B$} denote the summand on which the $\mc{O}_F$-action is through $\mc{O}_F\rightarrow k_{\mf{p}}\xhookrightarrow{\tau} \mb{F}_q$.  The latter is  equipped with a Hodge filtration
\[
0\rightarrow \omega_{\bar{B}^t, \tau} \rightarrow H_{1, \tau}^{dR}\rightarrow \omega_{\bar{B}, \tau}^*\rightarrow 0.
\]
The Hodge filtration is defined  in \cite[\S~5.1]{katotrihan}, and proved to be locally free in Lemma 5.2 of loc.cit.. The proof that the sub and quotient are isomorphic to  $\omega_{\bar{B}^t, \tau}, \omega_{\bar{B}, \tau}^*$, respectively, is given in \cite[Example 5.4 (b)]{katotrihan}.
%where by definition, $\omega_{\bar{B}^t, \tau} \subset H_{1, \tau}^{dR}$ is the kernel of $F$.
We sometimes also denote the sub-bundle $\omega_{\bar{B}^t, \tau}$ by $\Fil^1_{\tau}$. The Frobenius action on $H^{dR}_{1}(B)$ annihilates each of $\Fil^1_{\tau}$
\end{enumerate}
\end{defnprop}
\begin{prop}\label{prop:dim1}
We have $\dim H_{1, \tau}^{dR}=2$, $\dim \Fil^1_{\tau}=1$ for all embeddings $\tau$.
\end{prop}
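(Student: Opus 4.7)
The plan is to prove Proposition~\ref{prop:dim1} by first decomposing every relevant vector bundle under the $\mc{O}_F$-action, then computing ranks at a totally degenerate cusp $z\in Z$, and finally propagating the computation to all of $\bar{C}$ by connectedness.

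First I would observe that, since $F$ is unramified at $p$ and $\mb{F}_q$ contains every residue field $k_\mf{p}$, there is a canonical isomorphism of $\mc{O}_{\bar{C}}$-algebras $\mc{O}_F \otimes_{\mb{Z}} \mc{O}_{\bar{C}} \cong \prod_{\tau} \mc{O}_{\bar{C}}$, indexed by the $d$ embeddings $\tau$. Applied to the $\mc{O}_F$-modules $M_{(\bar{C},Z)} = H_1^{dR}$, $\omega_{\bar{B}}$, and $\omega_{\bar{B}^t}$, this yields canonical $\tau$-decompositions; each $\tau$-piece is a direct summand of a locally free sheaf on the smooth connected curve $\bar{C}$, hence is itself locally free of globally constant rank. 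It therefore suffices to verify the claimed ranks at a single convenient point.

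That point will be a cusp $z\in Z$, where I would use the totally degenerate reduction hypothesis in an essential way. At such $z$, the N\'eron fiber $\bar{B}_z$ is a torus $T_z$ of dimension $d$ carrying the induced $\mc{O}_F$-action, and the Kato--Trihan construction identifies the logarithmic Hodge bundle $\omega_{\bar{B},z}$ with the cotangent space of $T_z$ at the identity, namely $X_z \otimes_{\mb{Z}} k(z)$, where $X_z \defeq X^*(T_z)$. The $\mb{Q}$-vector space $X_z \otimes \mb{Q}$ carries a faithful $F$-action of $\mb{Q}$-dimension $d=[F:\mb{Q}]$, so $X_z$ is a rank-one projective $\mc{O}_F$-module (i.e.\ a fractional ideal). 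Any such module becomes free of rank one after tensoring with the semisimple ring $\mc{O}_F \otimes k(z) = \prod_\tau k(z)$, yielding $\dim \omega_{\bar{B},z,\tau} = 1$ for every $\tau$. Applying the identical argument to the dual torus $(\bar{B}^t)_z$ gives $\dim \omega_{\bar{B}^t,z,\tau} = 1$.

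Combining these steps, each $\omega_{\bar{B}^t,\tau}$ is a line bundle on $\bar{C}$, and the $\mc{O}_F$-equivariant Hodge short exact sequence $0 \to \omega_{\bar{B}^t} \to H_1^{dR} \to \omega_{\bar{B}}^\vee \to 0$ decomposes into $\tau$-components, producing $\dim \Fil^1_\tau = 1$ and $\dim H_{1,\tau}^{dR} = 2$. I anticipate the main technical point to be confirming that the logarithmic Hodge filtration at $z$ is identified with $\omega_{T_z}$ in an $\mc{O}_F$-equivariant fashion within the Kato--Trihan formalism; once that compatibility is pinned down, everything else reduces to a dimension count.
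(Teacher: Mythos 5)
Your argument is correct, and for the key step it takes a genuinely different route from the paper. The paper also reduces to the cusp, but there it works crystalline-theoretically: it uses the multiplicative sub-$p$-divisible group $\mscr{H}\subset \pdiv|_{\Spec(k((t)))}$ of dimension $d$ supplied by total degeneration, passes to the perfection $\kappa$, and uses bijectivity of Frobenius on $\mc{D}(\mscr{H}|_{\Spec(\kappa)})_{\tau}$ together with $F(\Fil^1_\tau)=0$ to get $\dim\Fil^1_\tau\leq 1$, concluding by a dimension count; the statement $\dim H^{dR}_{1,\tau}=2$ is obtained separately from the crystalline homology of closed fibers of $B/C$, using that $B$ is of $\GL_2(\mc{O}_F)$-type. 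You instead read off both ranks at once from the fiber of the Hodge exact sequence at the cusp, identifying $\omega_{\bar{B},z}$ (resp.\ $\omega_{\bar{B}^t,z}$) with $X^*(T_z)\otimes k(z)$ and using that the character lattice of the degenerate torus is a rank-one projective $\mc{O}_F$-module; the reduction to a single point via constancy of the rank of a direct summand of a locally free sheaf on the connected curve $\bar{C}$ is the same device the paper uses implicitly. Your version is arguably more elementary and self-contained (no crystalline homology of fibers, and the two assertions come together), at the cost of the compatibility you correctly flag: that the Kato--Trihan logarithmic Hodge filtration at $z$ is $\mc{O}_F$-equivariantly identified with the co-Lie space of the semiabelian N\'eron fiber. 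The paper's Dieudonn\'e-theoretic phrasing has the advantage that the identical computation is reused verbatim for Proposition~\ref{prop:hassenonzero} (non-vanishing of the partial Hasse invariants), which your torus-lattice argument would not give directly.
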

\begin{proof}
For each  point $c: \Spec(\Fpbar)\rightarrow  C$,  let $H_{1}^{cris}(B_c)$ denote the crystalline homology of the fiber $B_c$. Since $B$ is of $\GL_2(\mc{O}_F)$-type, we have $\dim H_{1, \tau}^{cris}(B_c)=2$, where $H_{1, \tau}^{cris}(B_c)$ denotes the summand on which the $\mc{O}_F$-action is through the embedding $\mc{O}_F\rightarrow W(\Fpbar)$ lifting $\tau$. Therefore $\dim H^{dR}_{1, \tau}=2$ for each $\tau$, and it remains to show that $\dim \Fil^1_{\tau}=1$.

It suffices to show this when restricted to a neighborhood of a cusp $z\in Z$. Let $\pdiv$ denote the $p$-divisible group of $B$, and let $k[[t]]$ denote the local ring around $z$. The totally degenerate assumption implies $\pdiv|_{\Spec(k((t)))}$ has a sub $p$-divisible group $\mscr{H}$ of multiplicative type and dimension $d$. Let $\kappa\defeq k((t^{1/p^{\infty}}))$ denote the perfection of $k((t))$, and consider the inclusion of Dieudonn\'e modules  $\mc{D}(\mscr{H})\subset \mc{D}(\pdiv|_{\Spec(\kappa)})$. We denote by $H_1^{dR}(\mscr{H})$ the reduction mod $p$ of $\mc{D}(\mscr{H})$, which we may take as the definition of the de Rham homology of $\mscr{H}$. %as well as the $\tau$-components $\mc{D}(\mscr{H}|_{\Spec(\kappa)})_{\tau}$. 

Recall that Frobenius annihilates $\Fil^1\subset H^{dR}_{1}(B|_{\Spec(\kappa)})$. Since $\mscr{H}$ is of multiplicative type, the  Frobenius action is bijective (as we are using covariant Dieudonn\'e theory) on $H_1^{dR}(\mscr{H})$,  the induced map 
\[
H^{dR}_1(\mscr{H})\rightarrow H^{dR}_1(B|_{\Spec(\kappa)})/\Fil^1=\omega^*_{B|_{\Spec(\kappa)}}
\]
is injective, and hence bijective for dimension reasons. Now $\mscr{H}$ is acted on by $\mc{O}_F\otimes \mb{Z}_p=\mc{O}_{\mf{p}_1}\times \cdots \times \mc{O}_{\mf{p}_k}$, where $\mf{p}_i$ are the primes of $F$ lying over $p$, and $\mc{O}_{\mf{p}_i}$ denotes the completion of $\mc{O}_F$ at $\mf{p}_i$, which by our assumption is unramified over $\mb{Z}_p$; hence we have a corresponding decomposition of $p$-divisible groups
\[
\mscr{H}=\mscr{H}_1\times \cdots \times \mscr{H}_k,
\]
where $\mscr{H}_i$ is the image of the idempotent corresponding to the $\mc{O}_{\mf{p}_i}$ factor. By construction, $\mscr{H}_i$ is a $p$-divisible group of $\GL_2(\mc{O}_{\mf{p}_i})$-type. Recall that each $\tau$ specifies one of the  primes $\mf{p}_i$, and an embedding of the residue field $k_{\mf{p}_i}\xhookrightarrow{} \mb{F}_q$. Of course, fixing such an  embedding  $\tau: k_{\mf{p}_i}\xhookrightarrow{} \mb{F}_q$, every other embedding is of the form $\sigma^m\tau$, the composition of $\tau$ with  the $m$-th power of   absolute Frobenius on $\mb{F}_q$.

Each of $\mscr{H}_i$ is necessarily of multiplicative type, and hence the Frobenius 
\[
F_i: H_1^{dR}(\mscr{H}_i)^{(p)}\rightarrow  H_1^{dR}(\mscr{H}_i)
\]
is again bijective. %here, by $H_1^{dR}(\mscr{H}_i)$ we simply mean the reduction mod $p$ of the Dieudonn\'e module $\mc{D}(\mscr{H}_i)$. 
Let  $H_{1}^{dR}(\mscr{H}_i)=\bigoplus_{\tau} H_{1, \tau}^{dR}(\mscr{H}_i)$ denote the decomposition into $\tau$-components, as $\tau$ varies over embeddings $k_{\mf{p}_i}\xhookrightarrow{} \mb{F}_q$. Now, for each $\tau$, $F_i$ sends $H_{1, \tau}^{dR}(\mscr{H}_i)$ to $H_{1, \sigma \tau}^{dR}(\mscr{H}_i)$; as $\mscr{H}_i$ is of multiplicative type, this is an isomorphism. In other words, for a fixed $i$, the dimension of $H_{1, \tau}^{dR}(\mscr{H}_i)$ is independent of $\tau$, and hence  at least one. Since $H^{dR}_1(\mscr{H})$ has dimension $d$, we deduce that $\dim H_{1, \tau}^{dR}(\mscr{H}_i)=1$ for each choice of $i$ and $\tau$. This implies that $\omega^*_{\bar{B}, \tau}$ has rank one for each $\tau$, and hence the same is true of $\Fil^1_{\tau}$, as required.

%Similarly, by considering the \'etale part of $\pdiv$, we see that the Newton polygon has $d$ slope one segments and $d$ slope zero ones, i.e.  it is ordinary. On the other hand, the kernel of the Frobenius action on $H^{dR}_1$ is precisely $\omega_{B^t}$, which has dimension $d$. 

%The Frobenius action is bijective on the latter, and since $F$ annihilates $\dim \Fil^1_{\tau}$, we must have $\dim \Fil^1_{\tau}\leq 1$. On the other hand, $\Fil^1_{\tau}\cong \omega_{\bar{B}^t, \tau}$, and we conclude $\dim \Fil^1_{\tau}=1$ by a dimension count.
\end{proof}

We now recall the notion of partial Hasse invariants, following  \cite[\S~4.4]{tianxiao}. For any abelian scheme   $A$ over a characteristic $p$ scheme $S$, we denote by $A^{(p)}$ the pull-back of $A$ along absolute Frobenius on $S$; similarly for any coherent sheaf $M$ on $S$ we have the Frobenius pull-back $M^{(p)}$. Let $\sigma$ be absolute Frobenius on $\mb{F}_q$, and denote  by $\sigma \tau$ the composition $k_{\mf{p}}\xhookrightarrow{} \mb{F}_q \xrightarrow{\sigma} \mb{F}_q$; let $\sigma^{-1}\tau$ be the embedding such that $\sigma(\sigma^{-1}\tau)= \tau$. %If $S$ is a curve $C$ with compactification $\bar{C}$, and $A$ has semi-stable reduction along $\bar{C}-C$, with N\'eron model $\bar{A}\rightarrow \bar{C}$, then  

\begin{defn}
Notation as in \ref{setup:avB}. For each $\tau$, we have the  map $\omega_{\bar{B}^t, \tau} \rightarrow \omega_{\bar{B}^{t, (p)}, \tau}=\omega_{\bar{B}^t, \sigma^{-1}\tau}^{\otimes p}$ induced  by Verschiebung. We view this as a section $h_{\tau}\in \Gamma(\omega_{\bar{B}^t, \sigma^{-1}\tau}^{\otimes p} \otimes \omega_{\bar{B}^t, \tau}^{\otimes -1})$, and refer to it as a partial Hasse invariant.
\end{defn}

The following is a simple by-product of the proof of \Cref{prop:dim1}, and we therefore omit the proof.
\begin{prop}\label{prop:hassenonzero}
%Suppose $A\rightarrow C$ is an abelian scheme of $\GL_2(F)$-type, and such that it has totally degenerate reduction around $z\in \bar{C}-C$. 
Each of the $h_{\tau}$'s is not identically zero. 
\end{prop}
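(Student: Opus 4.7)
The plan is to mimic the proof of Proposition~\ref{prop:dim1}, localizing to a formal punctured neighborhood of a cusp $z \in Z$ and exploiting the multiplicative sub $p$-divisible group supplied by total degeneracy. Since $h_\tau$ is a global section of a line bundle on $\bar{C}$, it suffices to exhibit a single point of $C$ where $h_\tau$ does not vanish; equivalently, by Proposition~\ref{prop:hassecrit}, to exhibit a point where the image of $F: H^{dR,(p)}_{1,\sigma^{-1}\tau}\to H^{dR}_{1,\tau}$ is not equal to $\omega_\tau$.

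First I would set up the local picture exactly as in the proof of Proposition~\ref{prop:dim1}: let $k[[t]]$ be the complete local ring at $z$, let $\kappa = k((t^{1/p^\infty}))$, and let $\mscr{H}\subset \pdiv|_{\Spec(k((t)))}$ be the sub $p$-divisible group of multiplicative type and dimension $d$ supplied by the totally degenerate semistable reduction hypothesis. Passing to Dieudonn\'e modules over $\kappa$, the inclusion $\mc{D}(\mscr{H}|_{\Spec(\kappa)}) \subset \mc{D}(\pdiv|_{\Spec(\kappa)})$ is preserved by the $\mc{O}_F$-action, and the multiplicative type condition means that $F$ acts bijectively on $\mc{D}(\mscr{H}|_{\Spec(\kappa)})$. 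Taking $\tau$-components, $\mc{D}(\mscr{H}|_{\Spec(\kappa)})_\tau$ is a line inside $H^{dR}_{1,\tau}|_{\Spec(\kappa)}$ on which $F$ is bijective, by the same dimension count used in Proposition~\ref{prop:dim1}.

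Now I would compare this line with $\omega_\tau$. As noted in the proof of Proposition~\ref{prop:dim1}, $F$ annihilates $\omega_\tau^{(p)}$, so in particular $F$ is zero on $\omega_\tau$ viewed as a line in $H^{dR}_{1,\tau}$ (up to Frobenius twist on the source). Since $F$ is bijective on $\mc{D}(\mscr{H}|_{\Spec(\kappa)})_\tau$ but vanishes on $\omega_\tau$, the two lines must be distinct. The image of $F$ therefore contains $\mc{D}(\mscr{H}|_{\Spec(\kappa)})_\tau$ but is one-dimensional (because $F$ factors through $H^{dR,(p)}_{1,\sigma^{-1}\tau}/\omega^{(p)}_{\sigma^{-1}\tau}$, which is a line), and hence equals $\mc{D}(\mscr{H}|_{\Spec(\kappa)})_\tau \neq \omega_\tau$. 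By Proposition~\ref{prop:hassecrit}, $h_\tau$ is nonzero at the image point in $C$ of $\Spec(\kappa)$, and so is not identically zero on $\bar{C}$.

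I do not foresee any serious obstacle: the argument is entirely parallel to Proposition~\ref{prop:dim1}, only the final comparison between the $F$-stable line coming from $\mscr{H}$ and the $F$-killed line $\omega_\tau$ requires care. The only mild subtlety is bookkeeping of Frobenius twists in the statement of Proposition~\ref{prop:hassecrit}, but this does not affect the conclusion because the bijectivity of $F$ on the multiplicative part passes through these twists unchanged.
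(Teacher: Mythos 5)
Your argument is correct and is essentially the proof the paper has in mind (the paper omits it, referring to the argument of Proposition~\ref{prop:dim1}): localize at a cusp, use that Frobenius is bijective on the mod-$p$ Dieudonn\'e module of the multiplicative part $\mscr{H}$ but kills the Hodge line, so the $\tau$-component of $\mc{D}(\mscr{H})$ is a line complementary to $\Fil^1_\tau$ that exhausts the image of $F$, and conclude via Proposition~\ref{prop:hassecrit}. Your handling of the Frobenius-twist indexing is also fine, since the resulting inequality $\mc{D}(\mscr{H})_{\tau'}\neq\omega_{\tau'}$ holds for every embedding $\tau'$.
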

\iffalse 
\begin{proof}
Let $\pdiv$ denote the $p$-divisible group $A[p^{\infty}]$, and let $k[[t]]$ denote the local ring around $z\in \bar{C}$. %By the totally degenerate reduction assumption,   $\pdiv|_{k((t))}$ has a multiplicative sub $p$-divisible group of dimension $d$; let us denote this by $\mscr{H}$. Now let $k((t))^{\mathrm{perf}}\defeq k((t^{1/p^{\infty}}))$ be the perfection of $k((t))$, and  consider the Dieudonn\'e modules 

As in the proof of Proposition~\ref{prop:dim1}, consider
\[
\mc{D}(\mscr{H})\subset \mc{D}(\pdiv|_{\Spec(\kappa)}).
\]
For brevity let us denote the former by $N$ and the latter by $M$. We have the usual decomposition $M=\bigoplus M_{\tau}$, where each $M_{\tau}$ is free of rank two over $W(\kappa)$, and moreover there is a non-trivial Hodge filtration $\Fil^1_{\tau} \subset M_{\tau}/pM_{\tau}$.  

Since $\mscr{H}$ is of multiplicative type, Frobenius is an isomorphism on $N/pN$, and hence $N_{\tau}/pN_{\tau}\subset M_{\tau}/pM_{\tau}$ is one-dimensional, and which is moreover complementary to $\Fil^1_{\tau}$. Therefore the image of 
\[
F: (M_{\sigma^{-1}\tau}/pM_{\sigma^{-1}\tau})^{(p)} \rightarrow M_{\tau}/pM_{\tau} 
\]
is not contained in $\Fil^1_{\tau}$, and therefore $h_{\tau}$ is not identically zero by Proposition~\ref{prop:hassecrit}. 
\end{proof}
\fi
\section{Frobenius untwisting}\label{section:untwist}
\begin{defn}
Let $K/\mb{Q}_p$ be a local field with ring of integers $\mc{O}_K$. We say that a $p$-divisible group $\pdiv$ is of $\GL_2(\mc{O}_K)$-type if $\mc{O}_K\xhookrightarrow{} \End(\pdiv)$, and moreover $[K:\mb{Q}_p]=\dim \pdiv$.
\end{defn}
%Let $\pdiv$ be a $p$-divisible group of $\GL_2(\mc{O}_K)$-type over $C/\mb{F}_q$, for some $K/\mb{Q}_p$ which is unramified. 
We prove the following result, which generalizes a result of Xia's \cite[Theorem 6.1]{xia}. As before, suppose $C/\mb{F}_q$ is a smooth affine curve. For the definition and notation surrounding the Kodaira--Spencer map of a family of $p$-divisible groups, we refer the reader to \cite[\S 1.1]{xia}, as well as Theorem 3.11 of loc.cit.. For any sheaf $\mscr{F}$ on $C$ (e.g. a vector bundle or a $p$-divisible group), we write $\mscr{F}^{(p)}$ for the Frobenius twist, i.e. the pullback under absolute Frobenius on $C$.

\begin{lem}\label{lemma:untwist}
Let $\pdiv$ be any $p$-divisible group over $C$, such that  the Kodaira-Spencer map of $\pdiv$ is identically zero. Then there exists a $p$-divisible group $\mscr{H}$ isogenous to $\pdiv$ and such that $\mscr{H}^{(p)}\cong \pdiv$. Moreover, if $\pdiv$ is of $\GL_2(\mc{O}_K)$-type,  then so is $\mscr{H}$.
\end{lem}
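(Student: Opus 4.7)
I will follow the strategy of Xia \cite{xia}, working at the level of the covariant Dieudonn\'e crystal of $\pdiv$, and extract the desired untwist $\mscr{H}$ from the vanishing Kodaira--Spencer hypothesis. Let $(\mscr{M}, F, V, \nabla)$ denote the covariant Dieudonn\'e crystal of $\pdiv$ evaluated on $C$, with Hodge filtration $\omega = \omega_{\pdiv^t} \subset \mscr{M}$. The vanishing Kodaira--Spencer hypothesis is equivalent to saying that $\omega$ is horizontal, i.e., $\nabla(\omega) \subset \omega \otimes \Omega^1_C$; this is a necessary condition since any Frobenius pullback $\mscr{H}^{(p)}$ has horizontal Hodge filtration.

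The main step is then to construct a Dieudonn\'e crystal $\mscr{N}$ on $C$ with $\sigma^* \mscr{N} \cong \mscr{M}$ compatibly with $F, V, \nabla$, and the Hodge filtration. Emulating Xia's argument, the horizontality of $\omega$ together with the $F, V$-structure should give a Frobenius descent datum on $(\mscr{M}, F, V, \nabla)$ that goes beyond the underlying bundle. Applying de Jong's theorem then yields a $p$-divisible group $\mscr{H}$ on $C$ with $\mscr{H}^{(p)} \cong \pdiv$, and the relative Frobenius $\mscr{H} \to \mscr{H}^{(p)} \cong \pdiv$ provides the required isogeny. For the ``moreover'' statement, if $\mc{O}_K \hookrightarrow \End(\pdiv)$ then $\mc{O}_K$ acts on $\mscr{M}$ commuting with $F, V, \nabla$; by functoriality of the descent, this action restricts to $\mscr{N}$ and induces $\mc{O}_K \to \End(\mscr{H})$, which is injective because the isogeny $\mscr{H} \to \pdiv$ induces an injection on endomorphism rings after tensoring with $\mb{Q}$, and the equality $\dim \mscr{H} = \dim \pdiv = [K:\mb{Q}_p]$ is immediate since the dimensions of isogenous $p$-divisible groups coincide.

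\textbf{Main obstacle.} The main obstacle is the construction of $\mscr{N}$: vanishing Kodaira--Spencer is strictly weaker than vanishing $p$-curvature of $\nabla$, so na\"ive Cartier descent of the underlying bundle with connection fails, and one must genuinely use the $F, V$-structure (and in particular the horizontality of $\omega$) to descend the $p$-divisible group rather than just the module. Controlling how $\nabla$ interacts with $F$ on the descended crystal so that the pullback recovers the given data on the nose is the most delicate point; Xia's original argument exploited an explicit matrix calculation available only in the height-$2$, dimension-$1$ case, so a more conceptual replacement via the formalism of crystals over logarithmic thickenings will be needed here. The $\GL_2(\mc{O}_K)$-type hypothesis in the last clause is then almost free once the crystalline construction has been performed in sufficient functoriality.
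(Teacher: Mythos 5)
Your overall strategy (produce a Frobenius descent of the Dieudonn\'e crystal and invoke de Jong's equivalence) is in the right spirit, but the proposal has a genuine gap at exactly the point you flag as the ``main obstacle'': you never actually construct the descended crystal $\mscr{N}$, and you explicitly concede that Xia's matrix computation does not generalize and that ``a more conceptual replacement \ldots will be needed.'' As written, the central step of the proof is therefore missing, not merely sketched.

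The resolution in the paper is more elementary than a descent argument and does not require producing an abstract Frobenius descent datum. One evaluates the crystal on a lift $\tilde{C}$ of $C$ over $W(\mb{F}_q)$ equipped with a Frobenius lift $\sigma$, obtaining $(\mc{V},\nabla)$ with reduction $\pi:\mc{V}\to\mc{V}_C$, and sets $\mc{V}'\defeq\pi^{-1}(\Fil)$, a sublattice containing $p\mc{V}$. The identities $\Fil^{(p)}=\ker F=\Imag V$ show $\mc{V}'$ is stable under $F$ and $V$ (and under $\mc{O}_K$), and the vanishing of Kodaira--Spencer shows it is stable under $\nabla$; hence $\mc{V}'$ is the Dieudonn\'e module of a $p$-divisible group $\mscr{H}$, isogenous to $\pdiv$ via the inclusion $\mc{V}'\subset\mc{V}$. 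The untwisting isomorphism $\mscr{H}^{(p)}\cong\pdiv$ then comes for free from the Frobenius structure already present: a rank count gives $F(\mc{V}'^{\sigma})=p\mc{V}$, so $F/p$ identifies $\mc{V}'^{\sigma}$ with $\mc{V}$. In other words, the ``descent datum'' you were looking for is simply $F$ itself, applied to the modified lattice; no separate control of the interaction between $\nabla$ and $F$ is needed beyond horizontality of $\mc{V}'$. To repair your write-up you would need to supply this lattice modification (or an equivalent construction); the ``moreover'' clause and the identification of the isogeny are then routine, as you say.
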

\begin{proof}
Suppose $\pdiv$ has dimension $d$ and height $h$. Let $\mc{V}$ be the Dieudonn\'e crystal of $\pdiv$, and let $\mc{V}_C$ denote its evaluation on $C$, which is a vector bundle on $C$ of dimension $h$; moreover, it has a sub-bundle $\Fil^1$ given by the Hodge filtration, as well as the usual  Frobenius and Verschiebung maps
\[
F_C: \mc{V}_C^{(p)} \rightarrow \mc{V}_C, \ V_C: \mc{V}_C\rightarrow \mc{V}_C^{(p)}.
\]

Fix any lifting $\tilde{C}$ of $C$ over $W(\mb{F}_q)$, along with a lift $\sigma$ of absolute Frobenius. Evaluating the crystal on $\tilde{C}$ we obtain a vector bundle on $\tilde{C}$ with integrable connection, which we  denote by $(\mc{V}_{\tilde{C}}, \nabla)$; this has its own Frobenius and Verschiebung, which we denote by $F_{\tilde{C}}, V_{\tilde{C}}$ respectively; moreover we have  a mod $p$ reduction map $\pi: \mc{V}_{\tilde{C}}\rightarrow \mc{V}_C$.

We now construct the Dieudonn\'e crystal for the hypothetical \enquote{Frobenius untwisting} of $\mscr{G}$. Let $\mc{V}'_{\tilde{C}}\subset \mc{V}_{\tilde{C}}$ denote the sub-module $\pi^{-1}(\Fil^1)$, which contains $p\mc{V}_{\tilde{C}}$; from the conditions  $(\Fil^1)^{(p)}=\ker F_C=\Imag V_C$ it is straightforward to see that $\mc{V}'_{\tilde{C}}$ is stable under both $F_{\tilde{C}}$ and $V_{\tilde{C}}$, as well as the $\mc{O}_K$-action if $\pdiv$ is of $\GL_2(\mc{O}_K)$-type. Moreover, since we are assuming that  Kodaira-Spencer map is identically zero, $\mc{V}'_{\tilde{C}}$ is stable under $\nabla$, and therefore $\mc{V}'_{\tilde{C}}$ is the Dieudonn\'e module for a $p$-divisible group $\mscr{H}$, which is  of $\GL_2(\mc{O}_K)$-type if $\pdiv$ is.   

%For any affine open $U\subset \tilde{C}$, let $\sigma$ be a lift of the absolute Frobenius.
We claim that  
\begin{equation}\label{eqn: untwist}
F_{\tilde{C}}(\mc{V}_{\tilde{C}}'^{(p)})=p\mc{V}_{\tilde{C}}:
\end{equation} indeed, by the construction of $\mc{V}'$, we have the inclusions
\[
F_{\tilde{C}}(\mc{V}_{\tilde{C}}'^{(p)})\subseteq p\mc{V}_{\tilde{C}}\subseteq \mc{V}_{\tilde{C}}'\subseteq \mc{V}_{\tilde{C}}.
\]
It remains to check that the left most inclusion is an equality.  We have $\dim\mc{V}_{\tilde{C}}'/F_{\tilde{C}}(\mc{V}_{\tilde{C}}'^{(p)})=h-d$, and  also $\dim \mc{V}_{\tilde{C}}'/p\mc{V}_{\tilde{C}}=h-d$, where dimension refers to the dimension as vector bundles on $C$; these, together with the chain of inclusions $F_{\tilde{C}}(\mc{V}'^{(p)})\subseteq p\mc{V}_{\tilde{C}}\subseteq \mc{V}_{\tilde{C}}'$, prove the claim.  But \eqref{eqn: untwist} says precisely that $\mscr{H}^{(p)}\cong \pdiv$; finally, the construction gives a natural isogeny $\mscr{H}\rightarrow \pdiv$, as required.
\end{proof}
\iffalse 
\begin{prop}\label{prop:negdegree}
Suppose each of  the $h_{\tau}$'s  is not identically zero. Then $\deg \Lie(A) \leq 0$. 
\end{prop}
\begin{proof}
By the assumption on $h_{\tau}$, the restrcting of essential Frobenius  gives a non-zero map
\[
F_{es}^{f}: \Lie(A^{(p^f)})_{\tau} \rightarrow \Lie(A)_{\tau},
\]
where $f$ is the degree of the residue field of $\mf{p}$. In other words, $\deg \Lie(A)_{\tau}^{1-p^f}$ has a non-zero section, and hence $\deg \Lie(A)_{\tau}\leq 0$.
\end{proof}
\fi 
\begin{proof}[Proof of Proposition~\ref{prop:drinfeld}]
Let $B\defeq B_{Drinf}\rightarrow C$ be the abelian scheme given by Theorem~\ref{thm:drinf}, which is of $\GL_2(F)$-type, and we assume that $\mc{O}_F$ acts on $B_{Drinf}$ as in \ref{setup:avB}. Let $\pdiv$ denote the $p$-divisible group of $B$; for  a prime $\mf{p}$  of $F$ lying above $p$, let $\pdiv_{\mf{p}}$ denote the factor of $\pdiv$ such that the $\mc{O}_F$-action is through the embeddings corresponding to $\mf{p}$ (exactly as in the proof of \Cref{prop:dim1}). By applying  Lemma~\ref{lemma:untwist} repeatedly, which must terminate by the same argument as in  \cite[Proof of Proposition 2.4]{zuoconstruct}\footnote{another proof of this termination is  given in \cite[Appendix A]{zuoconstruct}, which also applies essentially without change in our set-up}, we may assume that the Kodaira-Spencer map for $\pdiv_{\mf{p}}$ is non-vanishing. Let $\tau_0$ be an embedding for the prime $\mf{p}$ for which the Kodaira-Spencer map
\[
\Fil^1_{\tau_0} \rightarrow H_{1, \tau_0}^{dR}/\Fil^1_{\tau_0} \otimes \Omega_{\bar{C}}^1(Z)
\]
is non-zero: note that this is a map of line bundles, by \Cref{prop:dim1}. Therefore   $\deg H_{1, \tau_0}^{dR}/\Fil^1_{\tau_0}=-\deg \Fil^1_{\tau_0}$, from which we may conclude that  $\deg \Fil^1_{\tau_0}\leq (2g-2+n)/2$. 

By Proposition~\ref{prop:hassenonzero}, for any other embedding $\tau'=\sigma^j \tau_0$ for the prime $\mf{p}$  for some $j\geq 0$, we have a non-zero map 
\[
\omega_{\sigma^j\tau_0}\rightarrow \omega_{\tau_0}^{\otimes p^j}
\]
by iterating Verschiebung, and hence $\deg \omega_{\sigma^j \tau_0}\leq p^j\deg \omega_{\tau_0}$. This implies that, for each $\tau$,  $\deg \omega_{\tau}$ is bounded above purely in terms of $g,n, p, d$. Finally, recall that $\Fil^1_{\tau}\cong \omega_{\bar{B}^{t}, \tau}$ and $H^{dR}_{1, \tau}/\Fil^1_{\tau}\cong \omega_{\bar{B}, \tau}^*$, and therefore  the Hodge bundle of the  abelian scheme $(B\times B^t)^4$  has degree bounded above purely in terms of $p,g,n,d$, as required.

\end{proof}

\section{Mapping spaces}
For any $N\geq 1$, let $\mscr{A}_N$ denote the moduli stack of principally polarized abelian varieties of dimension $N$, with minimal, i.e. Baily-Borel-Satake, compactification $\mscr{A}_N^*$. Let $\omega$ denote the Hodge bundle on $\mscr{A}_N^*$.
\begin{defn}
Let $\mc{M}_{g,n}(\mscr{A}_N^*, h)$ denote the moduli stack\footnote{similar mapping stacks have been studied by Faltings\cite[\S~3]{faltings}} of $n$-pointed genus $g$ curves $(\bar{C}, p_1, \cdots , p_n)$, along with a map $\bar{f}: \bar{C}\rightarrow \mscr{A}_N^*$ such that 
\begin{enumerate}
    \item $\bar{f}^{-1}(\partial \mscr{A}_N^*)\subset \{p_1, \cdots, p_n\}$, and 
    \item $\bar{f}^*\omega$ has degree $h$.
\end{enumerate}
\end{defn}
Let $\mc{S}$ be the set of trace fields defined  in the introduction, and let $\mc{S}_{unip}\subset \mc{C}$ denote the subset of trace fields coming from local systems with unipotent local monodromy around each cusp of $C$.
\begin{lem}\label{lemma:family}
Suppose there are infinitely many fields $F_1, F_2, \cdots $ in $\mc{S}_{unip}$. Then there exists a smooth scheme of finite type $S/\mb{F}_q$, a family of smooth curves $\mc{C}\rightarrow S$, an abelian scheme $\mc{A}\rightarrow \mc{C}$, and a Zariski dense set of points $s_i\in S$ such that the family $\mc{A}_{s_i}\rightarrow \mc{C}_{s_i}$ is isomorphic to $(\mc{B}_{s_i}\times \mc{B}_{s_i}^t)^4$, where $\mc{B}_{s_i}\rightarrow \mc{C}_{s_i}$ is of  $\GL_2(F_i)$-type.
\end{lem}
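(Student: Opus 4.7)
The plan is as follows. For each field $F_i$, Proposition~\ref{prop:drinfeld} produces a pointed curve $(\bar{C}_i, Z_i)$ of type $(g,n)$ together with an abelian scheme $B_i \rightarrow C_i$ of $\GL_2(F_i)$-type, such that $(B_i \times B_i^t)^4$ admits a principal polarization inducing a morphism $\bar{f}_i: \bar{C}_i \rightarrow \mscr{A}_{8d}^*$ with $\deg \bar{f}_i^* \omega \leq D$, a bound depending only on $g,n,d,p$. Since $D$ is finite and the degree is a non-negative integer, pigeonhole lets me pass to an infinite subsequence (still denoted $F_i$) for which $\deg \bar{f}_i^* \omega = h$ takes a single fixed value; each such pair then defines an $\Fpbar$-point $t_i$ of the mapping stack $\mc{M}_{g,n}(\mscr{A}_{8d}^*, h)$.

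Next, I would establish that $\mc{M}_{g,n}(\mscr{A}_{8d}^*, h)$ is of finite type over $\mb{F}_p$. The forgetful map to $\mc{M}_{g,n}$ is representable by a Hom-scheme construction, and since $\mscr{A}_{8d}^*$ is projective over $\mb{F}_p$, combining the degree bound $\deg \bar{f}^* \omega = h$ with an auxiliary ample line bundle on $\mscr{A}_{8d}^*$ bounds the Hilbert polynomial of the graph of $\bar{f}$, so the relevant locus in the Hom-scheme is of finite type by Grothendieck's construction. Combined with the finite-type-ness of $\mc{M}_{g,n}$, this proves the claim. Let $T$ be the Zariski closure of $\{t_i\}$ in this stack; it is Noetherian with finitely many irreducible components, so iterating pigeonhole with Noetherian induction produces an irreducible closed substack $T^\circ \subseteq T$ of positive dimension in which a Zariski-dense infinite subset $\{t_{i_j}\}$ of the original points lies.

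Let $S \subseteq T^\circ$ be a smooth, nonempty Zariski open subscheme, passing to a finite \'etale cover if necessary to rigidify stabilizers so that the universal family is representable. The universal object over $\mc{M}_{g,n}(\mscr{A}_{8d}^*, h)$ restricts to a smooth family $\bar{\mc{C}} \rightarrow S$ with marked sections $P_1, \ldots, P_n$ and a morphism $\bar{\mc{F}}: \bar{\mc{C}} \rightarrow \mscr{A}_{8d}^*$ satisfying $\bar{\mc{F}}^{-1}(\partial \mscr{A}_{8d}^*) \subseteq \bigcup_j P_j$. Setting $\mc{C} \defeq \bar{\mc{C}} \setminus \bigcup_j P_j$, the restriction $\bar{\mc{F}}|_{\mc{C}}$ factors through $\mscr{A}_{8d}$, and pulling back the universal principally polarized abelian scheme yields an abelian scheme $\mc{A} \rightarrow \mc{C}$. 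For each $s_i \in S$ corresponding to a $t_i$ in the dense subset above, the fiber $\mc{A}_{s_i} \rightarrow \mc{C}_{s_i}$ is by construction $(B_i \times B_i^t)^4 \rightarrow C_i$; setting $\mc{B}_{s_i} \defeq B_i$, which is of $\GL_2(F_i)$-type by Proposition~\ref{prop:drinfeld}(2), completes the verification.

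The main obstacle I anticipate is the finite-type claim for $\mc{M}_{g,n}(\mscr{A}_N^*, h)$: the line bundle $\omega$ is not itself ample on the (only normal, projective) minimal compactification $\mscr{A}_N^*$, so one must carefully combine the degree condition with an auxiliary ample polarization of $\mscr{A}_N^*$ to extract a genuine bound on the Hilbert polynomial of the graph of $\bar{f}$ before invoking Grothendieck's construction. This is parallel to the setup in \cite{faltings}, so I would follow that reference closely. A secondary technical point is the passage from the stack $T^\circ$ to a smooth variety $S$, which is handled by restricting to the smooth locus of the underlying reduced coarse moduli space of $T^\circ$ and taking a finite \'etale cover to trivialize any residual automorphism gerbes.
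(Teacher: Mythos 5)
Your proposal is correct and follows essentially the same route as the paper: invoke Proposition~\ref{prop:drinfeld} to get maps $\bar{C}_i\to\mscr{A}_{8d}^*$ of bounded degree, pigeonhole on the degree to land infinitely many points in the finite-type stack $\mc{M}_{g,n}(\mscr{A}_{8d}^*,h)$, and take the smooth locus of the Zariski closure. The additional care you take with the finite-typeness of the mapping stack (non-ampleness of $\omega$ on $\mscr{A}_N^*$) and with rigidifying the stack to get an honest variety $S$ fills in details the paper leaves implicit, but does not change the argument.
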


\begin{proof}
By Proposition~\ref{prop:drinfeld}, for each $i$, there is a pointed curve   $(\bar{C}_i, Z_i)$  of type $(g,n)$,  a map 
\[
\bar{f}_i: \bar{C}_i\rightarrow \mscr{A}_N^*
\]
with $\bar{f}_i^{-1}\partial \mscr{A}_N^*\subset Z_i$, and such that the degree $\bar{f}_i$ is bounded in terms of $g,n,p,d$.
\iffalse 
Now consider the moduli stack $\mc{M}_{g,n}(\mscr{A}_N^*, h)$, which comes with the forgetful map 
\[
\pi: \mc{M}_{g,n}(\mscr{A}_N^*,h)\rightarrow \mc{M}_{g,n}.
\]
By the above, the image of $\pi$ has infinitely many points, which we denote by $s_i\in \mc{M}_{g,n}$. Since $\mc{M}_{g,n}(\mscr{A}_N^*, h)$ is of finite type, the image of $\pi$ has positive dimension. 
\fi 
Therefore, for some $h$, we have infinitely many points $s_i\in \mc{M}_{g,n}(\mscr{A}_N^*, h)$. Since the latter is of finite type, taking $S$ to be the smooth part of the Zariski closure of the $s_i$ gives the desired family of curves $\mc{C}\rightarrow S$ and abelian scheme $\mc{A}$.
\end{proof}

\section{Proofs of main results}
\begin{prop}\label{prop: trivial-torsion-implies-tame-unip}
    Suppose $k$ is an algebraically closed field of characteristic $p>0$, $Z/k$ a smooth scheme of finite type, and $Z\subset \bar{Z}/k$ is a compactification, i.e. $\bar{Z}$ is normal, projective, and $D:= \bar{Z}\setminus Z$ is a simple normal crossings divisor. If $\pi_{\mc{A}}: \mc{A}\rightarrow Z$ is an abelian scheme such that the action of $\pi_1(Z)$ on the $\ell$-torsion of $\mc{A}$ is trivial, for $\ell>2$ a prime number, then $R^1\pi_{\mc{A}}\Qlbar$ has unipotent (geometric)  monodromy   along each  component of the boundary $D$. Moreover, the image of the representation of $\pi_1(Z)$ has pro prime-to-$p$ image.
\end{prop}
\begin{proof}
The claim about unipotent monodromy is given on \cite[p. 879]{krishnapal} in the third paragraph. For the prime-to-$p$ claim, note that, for $\ell>2$,  the subgroup $\Gamma(\ell)\subset \GL_n(\mb{Z}_{\ell})$, consisting of matrices which are the identity mod $\ell$, is pro-$\ell$, as long as $\ell >2$.
\end{proof}

\begin{lem}\label{lemma: stable-all-etale}
    Let $C/\mb{F}_q$ be a smooth curve, and $\pi: B\rightarrow C$ an abelian scheme of $\GL_2(F)$-type, for a number field $F$. Let $\mb{W}:=R^1\pi_*\mb{Q}_{\ell}$ be the relative Tate module of $B$, so that we have a decomposition 
    \[
    \mb{W}\otimes \Qlbar \cong \bigoplus_{\sigma: F\xhookrightarrow{} \Qlbar}\mb{L}_{\sigma}
    \]
    with each $\mb{L}_{\sigma}$ being of rank two.
    \begin{enumerate}
    \item  For any finite \'etale cover $C'\rightarrow C$, and any choice of $\sigma$, the trace field of $\mb{L}_{\sigma}|_{C'}$ is still $F$. For $\sigma\neq \tau$, the local systems $\mb{L}_{\sigma}, \mb{L}_{\tau}$ are not isomorphic.
\item 
Moreover, for any finite index subgroup $H\leq \pi_1(C_{\Fpbar})$, 
\[
\End(\mb{W}\otimes \Qlbar)^{H}=F\otimes \Qlbar=\prod_{\sigma: F\xhookrightarrow{} \Qlbar} \Qlbar.
\]
\end{enumerate}
\end{lem}

\begin{proof}
   We first prove point (1). Note that the second assertion of point (1) follows immediately from the first: indeed, if $\sigma\neq \tau$ and  $\mb{L}_{\sigma}|_{C'}\cong \mb{L}_{\tau}|_{C'}$, then $\sigma$ and $\tau$ must agree on the trace field of $\mb{L}_{\sigma}|_{C'}$, which means this latter trace field is strictly smaller than $F$. 
   
   We now prove the first assertion of (1). Suppose the contrary, i.e. that the trace field of $\mb{L}_{\sigma}|_{C'}$ is a strict sub-field $E\subset F$, for some $\sigma$. Taking a different embedding $\tau: F\xhookrightarrow{} \Qlbar$ which agrees with $\sigma$ on $E$, we get 
    \begin{equation}\label{eqn: etale-isom}
    \mb{L}_{\sigma}|_{C'} \cong \mb{L}_{\tau}|_{C'}.
    \end{equation}

    We may assume without loss of generality that $C'$ is Galois over $C$, so that $\pi_1(C)/\pi_1(C')$ is a finite group $\Gamma$. Since $\mb{L}_{\sigma}|_{C'}, \mb{L}_{\tau}|_{C'}$ are irreducible, \eqref{eqn: etale-isom} implies that there exists a character  $\chi: \Gamma \rightarrow \Qlbar^{\times}$ such that $\mb{L}_{\sigma}\cong \mb{L}_{\tau}\otimes \chi$. Since $\det \mb{L}_{\sigma}\cong \Qlbar(-1) \cong \det \mb{L}_{\tau}$, we deduce that $\chi$ factors as 
    \[
    \chi: \Gamma\rightarrow \{\pm 1\} \rightarrow \Qlbar^{\times}.
    \]
    Replacing $C'$ by the cover of $C$ defined by $\chi$., we may therefore assume that $C'$ has degree two over $C$. Suppose that the trace field of $\mb{L}_{\sigma}|_{C'}$ is a number field $E\subset F$.

    Applying Drinfeld's result (Theorem~\ref{thm:drinf}) to    $\mb{L}_{\sigma}|_{C'}$, we see that  there is some abelian scheme $B'\rightarrow C'$ of dimension $[E:\mb{Q}]$, and of $\GL_2(E)$-type, so that 
\[
\mb{W}|_{C'} \cong \mb{W}(B')^{\oplus m},
\]
where $\mb{W}(B')$ denotes the $\mb{Q}_{\ell}$-Tate module of $B'$, and  $m\defeq [F:E]$.

Consider the abelian scheme $\Res^{C'}_C B'$ over $C$. By the definition of Weil-restriction and the Tate conjecture for homomorphisms of abelian varieties over function fields (proven by Zarhin), we have 
\begin{equation}\label{eqn:dimhom2}
\dim_{\mb{Q}} \Homo_C(B, \Res^{C'}_C B')\otimes \mb{Q}=m,
\end{equation}
which is strictly greater than one by our assumption.

On the other hand, $B$ is simple over $C$, and since $C'$ is a degree two cover of $C$, $\dim \Res^{C'}_C B'=2[E:\mb{Q}]\leq m[E:\mb{Q}]=\dim B$. This contradicts Equation~\ref{eqn:dimhom2}.

    We now deduce the second part. For $H$ a finite index subgroup of $\pi_1(C_{\Fpbar})$, let 
    \begin{equation}\label{eqn: cover}
    \tilde{C}_{\Fpbar}\rightarrow C_{\Fpbar}
    \end{equation}
    be the corresponding finite \'etale cover of $C_{\Fpbar}$. Note that  there exists a finite extension  $\mb{F}_{q'}$ over which  \eqref{eqn: cover} is defined over: i.e. a curve $\tilde{C}/\mb{F}_{q'}$, along with a map $\tilde{C}\rightarrow C_{\mb{F}_{q'}}$ whose basechange along $\mb{F}_{q'}\xhookrightarrow{} \Fpbar$ is the map \eqref{eqn: cover}.

    We have the natural embedding $F\otimes \Qlbar \xhookrightarrow{}\End(\mb{W}\otimes \Qlbar)^H$, and suppose for the sake of contradiction that this is a strict inclusion.

This implies that two of the $\mb{L}_{\sigma}$'s become isomorphic when viewed as representations of $H$: more precisely, there are distinct embeddings $\sigma, \tau: F\xhookrightarrow{} \Qlbar$, such that $\mb{L}_{\sigma}|_{\tilde{C}_{\Fpbar}}\cong \mb{L}_{\tau}|_{\tilde{C}_{\Fpbar}}$. This in turn implies that there is a character $\chi:\pi_1(\Spec(\mb{F}_{q'}))\rightarrow \Qlbar^{\times}$ such that 
\[
\mb{L}_{\sigma}|_{\tilde{C}}\cong \mb{L}_{\tau}|_{\tilde{C}} \otimes \chi.
\]
As before, comparing determinants, we deduce that $\chi$ factors through $\{\pm 1\}\subset \Qlbar^{\times}$; therefore $\chi$ determines a degree two \'etale cover $\tilde{C}'\rightarrow \tilde{C}$ over which $\mb{L}_{\sigma}$ and $\mb{L}_{\tau}$ are isomorphic. This contradicts the first part of the lemma.
\end{proof}

\begin{comment} \begin{prop}
    Let $C/\mb{F}_q$ be a smooth curve, $B\rightarrow C$ an abelian scheme of $\GL_2(F)$-type. Suppose $\mc{G}\rightarrow C$ is another abelian scheme such that  the following condition holds: for any finite extension $\mb{F}_{q'}/\mb{F}_q$, write $C':=\mc{C}_{s_i}\times \Spec(\mb{F}_{q'})$; then  there are no non-zero maps of abelian schemes 
    \[
   B\times C'\rightarrow \mc{G}\times C'\]
    over $C'$. 

    For a prime $\ell \neq p$, let $V_A, V_B, V_{\mc{G}}$ be the $\Qlbar$-representations of $\pi_1(C)$ on the relative Tate modules of $A, B$ and  $\mc{G}$ respectively. Then 
    \[\End(V_A)^{\pi_1(\Cbar)}=\End(V_B)^{\pi_1(\Cbar)}\times \End(V_{\mc{G}})^{\pi_1(\Cbar)}.
    \]
\end{prop}

\begin{proof}
    It suffices to show that 
    \[
    \Hom_{\pi_1(\Cbar)}(V_B, V_{\mc{G}})=0.
    \]
Suppose there is a non-zero element $\Phi$. There exists $C':=\mc{C}_{s_i}\times \Spec(\mb{F}_{q'})$ for some finite extension $\mb{F}_{q'}/\mb{F}_q$, such that  there are positive dimensional sub-representations $U_B\subset V_B$, $U_{\mc{G}}\subset V_{\mc{G}}$ (i.e. stable under $\pi_1(C')$) such that 
\begin{itemize}
    \item $U_B|_{\pi_1(\Cprimebar)}, U_{\mc{G}}|_{\pi_1(\Cprimebar)}$ are irreducible,
    \item $\Phi$ restricts to an isomorphism \[
    \Psi: U_B|_{\pi_1(\Cprimebar)}\cong U_{\mc{G}}|_{\pi_1(\Cprimebar)}.
    \]
\end{itemize}
\end{proof}
\end{comment}

\begin{prop}\label{prop: prime-to-p-iso}
    Let $(\mc{C}, \mc{D})\rightarrow S$ be a family of smooth pointed curves: that is, there exists a smooth proper $S$-curve $\overline{\mc{C}}$, $\mc{D}$ is a relatively \'etale Cartier divisor on $\overline{\mc{C}}/S$, and $\mc{C}=\overline{\mc{C}}\setminus \mc{D}$. Suppose  $S$ is geometrically connected.  Suppose there is an abelian scheme $\mc{A}\rightarrow \mc{C}$, such that the action of $\pi_1(\mc{C})$ on the $\ell$-torsion is trivial. Then 
    \begin{itemize}
        \item for any closed point $s\in S$ with geometric point $\bar{s}$ lying over it, the representation of $\pi_1(\mc{C}_{s})$ on the $\ell$-adic Tate module of $\mc{A}|_{\mc{C}_{s}}$ factors through the prime-to-$p$ quotient $\pioneprime(\mc{C}_{s})$, and 
        \item for any other  closed point $t\in S$, with geometric points $\bar{t}$ lying over it, the representations of $\pioneprime   (\mc{C}_{\bar{s}})$ and $\pioneprime(\mc{C}_{\bar{s}})$ (on the $\ell$-adic Tate modules of $\mc{A}|_{\mc{C}_{\bar{s}}}$ and $\mc{A}|_{\mc{C}_{\bar{t}}}$, respectively) are isomorphic: that is, there exists a continuous  isomorphism $\pioneprime(\mc{C}_{\bar{s}})\cong \pioneprime(\mc{C}_{\bar{t}})$ such that these representations are isomorphic.
    \end{itemize}
\end{prop}

\begin{proof}
The first part follows from  \Cref{prop: trivial-torsion-implies-tame-unip}. We now prove the second part. Pick a point $s_{gen}$ of $S$ specializing to both $s$ and $t$, and let $\bar{s}_{gen}$ be a geometric point lying over $s_{gen}$. 

We have specialization isomorphisms of prime-to-$p$ fundamental groups (see \cite[Expos\'e XIII, Corollaire 2.12]{sga1960grothendieck} for this precise statement; we also refer the reader to \cite[\S 3]{otabe2018tame} for a nice exposition):
\[
\sp_{s}: \pioneprime(\mc{C}_{\bar{s}_{gen}}) \cong \pioneprime(\mc{C}_{\bar{s}}), \ \sp_t: \pioneprime(\mc{C}_{\bar{s}_{gen}}) \cong \pioneprime(\mc{C}_{\bar{t}}).
\]
By the construction  of the specialization map and proper base-change, the representation of $\pioneprime(\mc{C}_{\bar{s}})$ on the $\ell$-adic Tate module of $\mc{A}|_{\mc{C}_{\bar{s}}}$ is obtained by taking the corresponding representation of $\mc{C}_{\bar{s}_{gen}}$ and composing with the isomorpshim $\sp_s^{-1}$. The same goes for the representation of $\pioneprime(\mc{C}_{\bar{t}})$, which implies the claim.  
\end{proof}

The following is the key lemma to show finiteness of trace fields.

\begin{lem}\label{lemma:center}
Let $(\mc{C}, \mc{D})\rightarrow S$ be a family of smooth pointed curves.  Suppose there is an abelian scheme $\mc{A}\rightarrow \mc{C}$, and an infinite set of points $s_i\in S$ such that, for each $i$,  the  abelian scheme
$\mc{A}_{s_i}\rightarrow \mc{C}_{s_i}$
is isogenous to $B_i^{k}$ for an abelian scheme $B_i\rightarrow \mc{C}_{s_i}$ which is of $\GL_2(F_i)$-type, for some number field $F_i$, and $k\geq 1$ (the latter being independent of $i$). Then the set $\{F_i\}$ of trace fields  is finite.
\end{lem}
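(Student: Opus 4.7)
The plan is to show that every trace field $F_i$ embeds into a single fixed number field $K$ constructed from the total $\ell$-adic local system of the family $\mc{A}\to\mc{C}\to S$; finiteness of $\{F_i\}$ then follows from the elementary fact that a number field has only finitely many subfields.

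First, I would replace $S$ by the Zariski closure of $\{s_i\}$, and after passing to a smooth geometrically connected dense open (and, if necessary, a finite \'etale cover), assume $\{s_i\}$ is Zariski dense in a smooth geometrically connected $\mb{F}_q$-variety $S$. Fix $\ell\neq p$ and consider $\mb{V}_\ell\defeq R^1\pi_*\Qlbar$ on $\mc{C}$, where $\pi\colon\mc{A}\to\mc{C}$ is the structure map. Being the first cohomology of an abelian scheme, $\mb{V}_\ell$ is semisimple, so it decomposes as $\mb{V}_\ell=\bigoplus_j\mb{W}_j$ into finitely many $\Qlbar$-irreducibles; by Drinfeld and Lafforgue each $\mb{W}_j$ has a number-field trace field, and the compositum $K$ of these is a fixed number field that contains every Frobenius trace on any $\mb{W}_j$ at any closed point of $\mc{C}$.

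By standard specialization arguments, for $s$ in a dense open $U\subset S$ the geometric monodromy of $\mb{V}_\ell|_{\mc{C}_s}$ agrees with that of $\mb{V}_\ell$ on $\mc{C}$; since $\{s_i\}$ is Zariski dense in $S$, infinitely many $s_i$ lie in $U$, and replacing $\{s_i\}$ by this infinite subset I may assume each $\mb{W}_j|_{\mc{C}_{s_i}}$ remains $\Qlbar$-irreducible. The rank-$2d_i$ sub-local-system $\mb{L}_i\defeq H^1_\ell(\mc{B}_{s_i})\subset\mb{V}_\ell|_{\mc{C}_{s_i}}$ coming from the $\GL_2(F_i)$-type factor must then be a direct sum of certain $\mb{W}_j|_{\mc{C}_{s_i}}$'s. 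Applying Chin's theorem on $\ell$-independence of monodromy groups to the compatible system $\{\mb{V}_{\ell'}\}_{\ell'}$ yields a $\mb{Q}$-form of the commutant of monodromy in $\End(\mb{V}_\ell)$, and the $F_i\otimes\Qlbar$-action on $\mb{L}_i$ coming from the endomorphisms of $\mc{B}_{s_i}$ descends---after arithmetic bookkeeping using that the Frobenius trace on each rank-$2$ $F_{i,\lambda}$-summand lies in $F_{i,\lambda}$ compatibly across the primes $\lambda\mid\ell$---to an embedding $F_i\hookrightarrow K$.

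Once $F_i\subset K$ for every $i$, the set $\{F_i\}$ is contained in the finite set of subfields of $K$, and hence is finite. The main obstacle will be the descent in the previous paragraph: the $\Qlbar$-embedding coming from the $F_i$-action has to be shown to descend to a $\mb{Q}$-embedding into the fixed $\mb{Q}$-form of the monodromy commutant, which requires carefully combining Chin's $\mb{Q}$-form with Frobenius compatibility---namely, that the trace $a_c\in F_i$ is an arithmetic datum independent of the auxiliary $\ell$.
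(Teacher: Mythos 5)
Your overall architecture differs from the paper's: you want a global embedding $F_i\hookrightarrow K$ into one fixed number field (the compositum of the trace fields of the irreducible constituents of $R^1\pi_*\Qlbar$ over the total space $\mc{C}$), and then finiteness of subfields of $K$; the paper instead shows, for \emph{every} $\ell$, that $F_i$ embeds into $Z\otimes_E E_\lambda$, where $Z$ is the center of the $E$-rational endomorphism algebra supplied by Chin's $\ell$-independence theorem applied to the (fixed, $i$-independent) geometric monodromy representation of $\pi_1(\mc{C}_{\bar s_1})$, and then concludes by Hermite--Minkowski (bounded degree, unramified outside a fixed finite set). Your target statement is strictly stronger, and the two places where you would have to earn it are exactly the places the proposal breaks down.

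First, the claim that ``for $s$ in a dense open the geometric monodromy of $\mb{V}_\ell|_{\mc{C}_s}$ agrees with that of $\mb{V}_\ell$ on $\mc{C}$'' is false: by the homotopy exact sequence the image of $\pi_1(\mc{C}_{\bar s})$ in $\pi_1(\mc{C})$ is a fixed \emph{normal} subgroup (independent of $s$ --- this is the correct statement, and it is the paper's starting point), but it need not have dense image in the total monodromy; e.g.\ if $\mc{A}$ were pulled back from $S$ the fiber monodromy would be trivial. Consequently $\mb{W}_j|_{\mc{C}_{s_i}}$ can be reducible for every $s_i$ (Clifford theory: the restriction to the normal subgroup is a sum of conjugate constituents), and then the rank-two piece $V_\sigma$ is only a proper constituent of some $\mb{W}_j|_{\mc{C}_{s_i}}$, so $\tr(\Frob_x|V_\sigma)$ is not determined by $\tr(\Frob_x|\mb{W}_j)$ and the containment $\sigma(F_i)\subseteq(\text{trace field of }\mb{W}_j)\subseteq K$ --- the entire engine of your proof --- is not established. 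Second, the step you yourself flag as ``the main obstacle,'' descending the $F_i\otimes\Qlbar$-action to a $\mb{Q}$-embedding into a fixed $\mb{Q}$-form, is precisely where the content of the lemma lives and is not carried out. The paper's version of this step is to prove that $F_i\otimes\Qlbar$ is the \emph{full} (hence central) endomorphism algebra of its summand, by showing $\Homo_{\pi_1(\mc{C}_{\bar s_i})}(V_\sigma,V_\tau)=0$ for $\sigma\neq\tau$: a nonzero map forces $V_\tau\cong V_\sigma\otimes\chi$ for a character $\chi$ of $\Gal(\bar{\mb{F}}_q/\mb{F}_q)$, the determinant condition makes $\chi$ quadratic, and Lemma~\ref{lemma:tracefield} (invariance of the trace field under the degree-two base change, proved via Weil restriction and Zarhin's theorem) then forces $\sigma=\tau$. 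Nothing in your proposal substitutes for this argument. A minor further point: Lafforgue's theorem gives number-field trace fields for local systems on \emph{curves}; for the constituents $\mb{W}_j$ on the higher-dimensional total space $\mc{C}$ you would need Deligne's finiteness theorem for trace fields (or the Tate conjecture for $\End_{\mc{C}}(\mc{A})$), not Drinfeld--Lafforgue.
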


\begin{proof}
Since passing to finite \'etale covers of $\mc{C}_{s_i}$ does not change the trace field by \Cref{lemma: stable-all-etale}, we may as well assume that the action of $\pi_1(\mc{C})$ on the $\ell$-torsion of $\mc{A}$ is trivial. Under this assumption, by \Cref{prop: prime-to-p-iso}, for each $s_i$, the representation $\mb{V}_{\ell, \bar{s}_i}$ of $\pi_1(\mc{C}_{\bar{s}_i})$ on the $\ell$-adic Tate module of $\mc{A}|_{\mc{C}_{\bar{s}_i}}$ factors through the prime-to-$p$ quotient $\pioneprime(\mc{C}_{\bar{s}_i})$, and moreover the resulting representations of $\pioneprime(\mc{C}_{\bar{s}_i})$ are isomorphic for all $i$, in the sense of the statement of \Cref{prop: prime-to-p-iso}.

 Fix $s=s_1$, and let $\mb{V}_{\ell}$ denote the $\ell$-adic Tate module of $\mc{A}_{s}$. As $\ell$ varies over primes different from $p$, the $\mb{V}_{\ell}$'s form a $\mb{Q}$-compatible system\footnote{see the introduction of \cite{chin} for the definition of compatible systems}. Let $G_{\mathrm{geom}, \ell}$ denote the Zariski closure of the image of the geometric fundamental group $\pi_1(\mc{C}_{\bar{s}_i})$ under this representation; we view $\mb{V}_{\ell}$ as a representation of $G_{\mathrm{geom}, \ell}$. Let $G_{geom, \ell}^0$ denote the neutral component of $G_{geom, \ell}$. By the previous paragraph, the isomorphism class of the triple $(G_{geom, \ell}, G_{geom, \ell}^0, \mb{V}_{\ell})$ is independent of $i$, and we therefore chose to  omit   $i$ from the notation. 

By Chin's independence of $\ell$ theorem \cite[Theorem~1.4]{chin}, there exists a number field $E$, a split algebraic group $G^0/E$, and a $E$-rational representation $\mb{U}$ such that, for any place $\lambda$ of $E$ above $\ell$,  $G^0_{geom, \ell}\otimes E_{\lambda}\simeq G^0_{E_{\lambda}}$, and  $\mb{V}_{\ell}\otimes_{\mb{Q}} E_{\lambda}\simeq \mb{U}\otimes E_{\lambda}$ as representations of these groups. Define  $Z\defeq Z(\End(\mb{U})^{G})$, which is a finitely generated  $E$-algebra.

%Consider the $\mb{Q}$-local system on $\mc{C}_{s_{i, \mb{C}}}$ given by $\mb{V}=R^1\tilde{\pi}_{i, \mb{C} *}\mb{Q}.$ Note that, by isomonodromy,  the isomorphism class of the pair $(\pi^{top}_1(\mc{C}_{s_{i, \mb{C}}}), \mb{V})$ is independent of $i$. Now consider $Z\defeq Z(\End(\mb{V})^{\pi^{top}_1(\mc{C}_{\tilde{s}_{i, \mb{C}}})})$, where  $Z(R)$ denotes the center of an algebra $R$.

\begin{claim} 
We have $F_i\otimes \mb{Q}_{\ell}\xhookrightarrow{} Z\otimes_E E_{\lambda}$ for each $i$ and $\lambda$.
\end{claim} 
The claim implies the finiteness of the $F_i$'s: indeed, $Z$ is a product of number fields, so is unramified at $\ell$ for $\ell$ large enough. Therefore the $F_i$'s belong to the finite set \cite[Ch. II Theorem 2.13]{neukirch2013algebraic} of number fields with bounded degree and unramified outside a finite set of primes.
\begin{proof}[Proof of Claim] 
%Recall that  the family $\mc{A}_{s_i}\rightarrow \mc{C}_{s_i}$ is of the form $\mc{A}_{s_i}=(B_i\times B_i^{t})^4$, where $B_i$ is of $\GL_2(F_i)$-type. %
By assumption, $\mc{A}_{s_i}$ is isogenous to $B_i^{k}$, where 
 $B_i$ is of $\GL_2(F_i)$-type and  $k\geq 1$.
    % $\mc{G}\rightarrow \mc{C}_{s_i}$ is an abelian scheme such that: for any finite extension $\mb{F}_{q'}/\mb{F}_q$, write $C':=\mc{C}_{s_i}\times \Spec(\mb{F}_{q'})$; then  there are no non-zero maps of abelian schemes 
  %  \[
   % B_i\times C'\rightarrow \mc{G}\times C'\]
    %over $C'$. 
%\end{itemize}   
Letting $F_i$ act on $B_i^{k}$ diagonally, we have 
\begin{align*}
F_i\otimes\mb{Q}_{\ell} & \xhookrightarrow{} \End(\mb{V}_{\ell})^{G^0_{geom, \ell}}\\
& \xhookrightarrow{} \End(\mb{V}_{\ell}\otimes_{\mb{Q}_{\ell}} E_{\lambda})^{G^0_{geom, \ell}}\\
& \simeq \End(\mb{U}\otimes_{E} E_{\lambda})^{G^0_{E_{\lambda}}}\\
&=\End(\mb{U})^{G^0}\otimes_{E} E_{\lambda}.
\end{align*}

%Indeed, by Grothendieck's specialization of \'etale fundamental groups, the right hand side is isomorphic to $Z(\End(\mb{V}\otimes \mb{Q}_{\ell})^{\pi_1(\mc{C}_{\bar{s}_i})})$, where $\bar{s}_i$ is a geometric point living over $s_i$. Recall that  the family $\mc{A}_{s_i}\rightarrow \mc{C}_{s_i}$ is of the form $\mc{A}_{s_i}=(B\times B^{t})^4$, where $B$ is of $\GL_2(F_i)$-type; we therefore  have $F_i\xhookrightarrow{}  \End(\mb{V}\otimes \mb{Q}_{\ell})^{\pi_1(\mc{C}_{\bar{s}_i})}$. 

Note that we have $\mb{V}_{\ell}\cong W^{\oplus k}$ as $\pi_1(\mc{C}_{\bar{s}_i})$-representations.  To show the image of the above map lies in the    center of $\End(\mb{V}_{\ell})^{G^0_{geom, \ell}}$, it suffices to show that\footnote{note that the Tate conjecture does not apply here since we are considering representations of the geometric fundamental group $\pi_1(\mc{C}_{\bar{s}_i})$} 
$\End(W\otimes \overline{\mb{Q}}_{\ell})^{H}=F_i\otimes \Qlbar $, where $W$ denotes the rational $\ell$-adic Tate module of $B$, and $H$ ranges over all finite index subgroups of $\pi_1(\mc{C}_{\bar{s}_i})$: indeed, this implies that  $\End(\mb{V}_{\ell})^{G^0_{geom, \ell}}\Qlbar$ is precisely $M_k(F_i\otimes \Qlbar)$, the ring of $k \times k$-matrices with entries in $F_i\otimes \Qlbar$, and the embedding of $F_i\otimes \mb{Q}_{\ell}$  into $M_k(F_i\otimes \Qlbar)$ is precisely the diagonal one.

But the equality $\End(W\otimes \overline{\mb{Q}}_{\ell})^{H}=F_i\otimes \Qlbar $ follows immediately from \Cref{lemma: stable-all-etale}, as required.

\iffalse 
We have the decomposition
\[
W\otimes \Qlbar =\bigoplus_{\sigma: F_i\xhookrightarrow{} \Qlbar} V_{\sigma},
\]
where each $V_{\sigma}$ is a two-dimensional $\overline{\mb{Q}}_{\ell}$-representation of $\pi_1(\mc{C}_{s_i})$. Moreover, the $F_i$-action on $V_{\sigma}$, induced by the action on $B$, is given by $\sigma$. It remains to show that there are no non-trivial elements in $\Homo_{\pi_1(\mc{C}_{\bar{s}_i})}(V_{\sigma}, V_{\tau})$ for $\sigma\neq \tau$. 

Suppose we have such a non-zero  element $\Phi \in \Homo_{\pi_1(\mc{C}_{\bar{s}_i})}(V_{\sigma}, V_{\tau})$. Since each $V_{\sigma}$ stays irreducible upon restriction to $\pi_1(\mc{C}_{\bar{s}_i})$\footnote{see for example \cite[Lemma 12]{snowdentsimerman}}, the hom space above is one-dimensional, and we see there exists a character $\chi$ of $\Spec(\mb{F}_q)$ such that $V_{\sigma}\otimes \chi \cong V_{\tau}$. Since  $\det V_{\sigma} \cong \Qlbar(-1)$ for each $\sigma$, we have that $V_{\sigma}|_{\mc{C}'_{s_i}}\cong V_{\tau}|_{\mc{C}_{s_i}'}$, where $\mc{C}_{s_i}' \defeq \mc{C}_{s_i}\otimes_{\Spec(\mb{F}_q)} \Spec(\mb{F}_{q^2})$. This contradicts point (1) of \Cref{lemma: trace-field-stable}.
\fi 
%Moreover, by Lemma~\ref{lemma:tracefield} below, passing to $\mc{C}_{s_i}'$ does not change the trace field of Frobenii. Therefore, for all closed points $x\in \mc{C}_{s_i}'$, 
%\[
%\sigma(\tr(\Frob_x))=\tau(\tr(\Frob_x));
%\]
%since $\{\tr(\Frob_x)\}$ generate $F_i$ as $x$ ranges over all points of $\mc{C}'_{s_i}$, we deduce that $\sigma =\tau$, as required.
\end{proof}
\end{proof}

We now assemble all the ingredients to deduce our main result.
\begin{proof}[Proof of Theorem~\ref{thm:main}]
 As in the statement of the theorem, we fix a pair $(g,n)$. Suppose, for the sake of contradiction,  that we have infinitely many distinct trace fields $F_i$'s in $\mf{F}_{g,n}= \bigcup_{\bar{C}, Z, q} \mf{F}(\mscr{L}(\bar{C}, Z))$, corresponding to $\Qlbar$-local systems $\mb{L}_i$ on pointed curves $(\bar{C}_i, Z_i)$ of type $(g,n)$, so that, by \Cref{thm:drinf}, $\mb{L}_i$ comes from an abelian scheme $B_i\rightarrow C_i$ of relative dimension $d$, which is  moreover of $\GL_2(F_i)$-type. We first treat the case when  each  $\mb{L}_i$ has unipotent monodromy around each cusp: in this case, by  Lemma~\ref{lemma:family}, there exists $\mc{A}\rightarrow \mc{C}\rightarrow S$ as in the statement of \Cref{lemma:center},  at which point the lemma allows us to conclude in this case. 
 
We now handle the general case. By the same argument as in \cite[p. 879]{krishnapal}, by passing to the \'etale cover $\varphi_i: C'_i\rightarrow C_i$  trivializing  the $\ell$-torsion of $A_i$, we have that the pullbacks $\varphi_i^*A_i$ has unipotent monodromy or good reduction around each of the cusps of $C'_i$. By the Riemann--Hurwitz formula and the pigeonhole principle we may assume that the  $C'_i$ have the same topological type. By \Cref{lemma: stable-all-etale}, passing from $C_i$ to $C_i'$ does not change the trace field, and hence we are done by the unipotent case.

\end{proof}

\subsection{Proof sketch of Theorem~\ref{thm:diffp}}
\begin{comment}
\begin{prop}
Suppose $X/\mb{C}$ is a smooth curve, and $X\subset \bar{X}$ is a smooth compactification with $\bar{X}\setminus X$ non-empty. Suppose $\pi: A\rightarrow C$ an abelian scheme such that $R^1\pi_*\Qlbar$ decomposes as 
\[
R^1\pi_*\Qlbar=\bigoplus_{i=1}^d \mb{V}_i^{k}
\]
where  
\begin{itemize}
    \item each $\mb{V}_i$ is of rank two, and has unipotent monodromy around one of the cusps of $X$, and 
    \item the $\mb{V}_i$ are pairwise non-isomorphic.
\end{itemize}
Then $A$ is isogenous to $B^k$, where $B$ is of $\GL_2(F)$-type, where $F$ is a number field of degree $d$.
\end{prop}
\end{comment}

Since the proof of \Cref{thm:diffp} is very similar to that of \Cref{thm:main}, we only provide a sketch here. 
\begin{proof}[Proof sketch of Theorem~\ref{thm:diffp}]
The proof is essentially the same as that of Theorem~\ref{thm:main}. Suppose again that there are infinitely many $F_i\in \bigcup_{p, \bar{C}, Z}\mf{F}(\mscr{L}_p(\bar{C}, Z))$, which are of degree $d$ and are furthermore completely split at $p$. Suppose $F_i$ is attached to a local system $\mb{L}_i$, which in turn arises from an  abelian scheme   $B_i\rightarrow C_i$ of $\GL_2(F_i)$-type. By taking the cover $\varphi_i: C'_i\rightarrow C_i$ trivializing the $\ell$-torsion of $A_i$, we have  that $\varphi_i^*\mb{L}_i$ has unipotent monodromy around each cusp; let $\bar{C}'_i$ be a smooth compactification of $C'_i$. By Remark~\ref{rmk:split}, we may assume   that the induced map $\bar{C}_i'\rightarrow \mscr{A}_N^*$ has degree bounded by $g,n,d$, so as in the proof of Theorem~\ref{thm:main} we can find a family of curves  $\mc{C}\rightarrow S$, an abelian scheme $\mc{A}\rightarrow \mc{C}$, and points $s_i$ such that the fiber $\mc{A}_{s_i}$ is isogenous to  $B_i^8$-type. If $S$ is purely in characteristic $p$ for some $p$, then we conclude by Lemma~\ref{lemma:center}; it therefore remains to treat the case when $S$ is in characteristic zero. 

We may therefore pick a complex point $s_{\mb{C}}: \Spec(\mb{C})\rightarrow S$. Let $C_{s_{\mb{C}}}$ be the fiber  of $\mc{C}$  over $s_{\mb{C}}$, and $\pi_{\mb{C}}\mc{A}_{s_{\mb{C}}}\rightarrow C_{s_{\mb{C}}}$ the abelian scheme gotten by basechanging $\mc{A}\rightarrow S$. 

By specialization of prime-to-$p$ fundamental groups, we deduce that $\mb{V}_{s_{\mb{C}}}:= R^1\pi_{\mb{C}}\qbar$ decomposes into rank two pieces, and has infinite local monodromy at the cusps. The infinite local monodromy condition implies that $\mc{A}_{s_{\mb{C}}}$ is in fact isogenous to $B^8$  where $B\rightarrow C_{s_{\mb{C}}}$ is an abelian scheme of $\GL_2(F)$-type, for some number field $F$ of degree $d$. 

We may then spread out the elements of $\End(\mc{A}_{s_{\mb{C}}})$ to an open dense subset  $U\subset S$. Since the $s_i$'s are Zariski dense, we may assume  that $s_i\in U$ for all $i$ (by throwing out the $s_i$'s which lie outside $U$).  Since $\End(\mc{A}_{s_i})=M_8(F_i)$, we deduce that there is an injection $M_8(F)\rightarrow M_8(F_i)$, and hence $F=F_i$ for all $i$, which is a contradiction.
\end{proof}

\iffalse extend scalars to $\overline{\mb{Q}}_{\ell}$ and show that $F_i\xhookrightarrow{} Z(\End(\mb{V}\otimes \overline{\mb{Q}}_{\ell})^{\pi_1(\mc{C}_{\bar{s}_i})})$. We have a decomposition
\[
\mb{V}\otimes \overline{\mb{Q}}_{\ell} \simeq \bigoplus_{\sigma} \mb{W}_{\sigma},
\]
where the direct sum is indexed by embeddings $\sigma: F_i\xhookrightarrow{} \overline{\mb{Q}}_{\ell}$, and each $W_{\sigma}$ is a rank two $\overline{\mb{Q}}_{\ell}$-local system. Moreover, the embedding $F_i\xhookrightarrow{} Z(\End(\mb{V}\otimes \overline{\mb{Q}}_{\ell})^{\pi_1(\mc{C}_{\bar{s}_i})})$ is given by $F$ acting on $W_{\sigma}$ via the embedding $\sigma$.
\fi

\printbibliography[]

@article{tianxiao,
  title={On Goren--Oort stratification for quaternionic Shimura varieties},
  author={Tian, Yichao and Xiao, Liang},
  journal={Compositio Mathematica},
  volume={152},
  number={10},
  pages={2134--2220},
  year={2016},
  publisher={London Mathematical Society}
}

@incollection{maxim,
  title={Notes on motives in finite characteristic},
  author={Kontsevich, Maxim},
  booktitle={Algebra, arithmetic, and geometry},
  pages={213--247},
  year={2009},
  publisher={Springer}
}

@article{litt,
  title={Arithmetic representations of fundamental groups, II: Finiteness},
  author={Litt, Daniel},
  journal={Duke Mathematical Journal},
  volume={170},
  number={8},
  pages={1851--1897},
  year={2021}
}

@article{zuoconstruct,
  title={Constructing abelian varieties from rank 2 Galois representations},
  author={Krishnamoorthy, Raju and Yang, Jinbang and Zuo, Kang},
  journal={arXiv preprint arXiv:2208.01999},
  year={2022}
}

@article{faltings,
  title={Arakelov's theorem for Abelian varieties},
  author={Faltings, Gerd},
  journal={Inventiones mathematicae},
  volume={73},
  number={3},
  pages={337--347},
  year={1983},
  publisher={Springer}
}

@article{xia,
  title={On the deformation of a Barsotti-Tate group over a curve},
  author={Xia, Jie},
  journal={arXiv preprint arXiv:1303.2954},
  year={2013}
}

@article{snowdentsimerman,
  title={Constructing elliptic curves from galois representations},
  author={Snowden, Andrew and Tsimerman, Jacob},
  journal={Compositio Mathematica},
  volume={154},
  number={10},
  pages={2045--2054},
  year={2018},
  publisher={London Mathematical Society}
}

@incollection{delignefinitude,
  title={Un th{\'e}oreme de finitude pour la monodromie},
  author={Deligne, Pierre},
  booktitle={Discrete groups in geometry and analysis},
  pages={1--19},
  year={1987},
  publisher={Springer}
}

@article{katotrihan,
  title={On the conjectures of Birch and Swinnerton-Dyer in characteristic p> 0},
  author={Kato, Kazuya and Trihan, Fabien},
  journal={Inventiones mathematicae},
  volume={153},
  number={3},
  pages={537--592},
  year={2003},
  publisher={Springer-Verlag}
}

@article{krishnapal,
  title={Rank 2 local systems and abelian varieties II},
  author={Krishnamoorthy, Raju and P{\'a}l, Ambrus},
  journal={Compositio Mathematica},
  volume={158},
  number={4},
  pages={868--892},
  year={2022},
  publisher={London Mathematical Society}
}

@article{jostyau,
  title={Harmonic mappings and algebraic varieties over function fields},
  author={Jost, J{\"u}rgen and Yau, Shing-Tung},
  journal={American Journal of Mathematics},
  volume={115},
  number={6},
  pages={1197--1227},
  year={1993},
  publisher={JSTOR}
}

@article{chin,
  title={Independence of $\ell$ of monodromy groups},
  author={Chin, CheeWhye},
  journal={Journal of the American Mathematical Society},
  volume={17},
  number={3},
  pages={723--747},
  year={2004}
}

@misc{memaeda,
  doi = {10.48550/ARXIV.2211.06120},
  
  url = {https://arxiv.org/abs/2211.06120},
  
  author = {Lam, Yeuk Hay Joshua},
  
  keywords = {Number Theory (math.NT), FOS: Mathematics, FOS: Mathematics},
  
  title = {Motivic local systems on curves and Maeda's conjecture},
  
  publisher = {arXiv},
  
  year = {2022},
  
  copyright = {Creative Commons Attribution 4.0 International}
}

@book{neukirch2013algebraic,
  title={Algebraic number theory},
  author={Neukirch, J{\"u}rgen},
  volume={322},
  year={2013},
  publisher={Springer Science \& Business Media}
}

@article{sga1960grothendieck,
  title={Grothendieck, Rev{\^e}tements {\'e}tales et groupe fondamental},
  author={SGA, A},
  journal={S{\'e}minaire de g{\'e}om{\'e}trie alg{\'e}brique du Bois Marie},
  volume={1961},
  year={1960}
}

@article{otabe2018tame,
  title={The tame fundamental group schemes of curves in positive characteristic},
  author={Otabe, Shusuke},
  journal={arXiv preprint arXiv:1802.01111},
  year={2018}
}

@article{drinfeld2010conjecture,
  title={On a conjecture of Deligne},
  author={Drinfeld, Vladimir},
  journal={arXiv preprint arXiv:1007.4004},
  year={2010}
}
%\printbibliography[
%heading=bibintoc,
%title={References}
%]  % Bibliography database file Y.bib

\end{document}